\newcommand{\R}{{\mathbb R}}
\newcommand{\N}{{\mathbb N}} 
\newcommand{\T}{{\mathbb T}} 
\newcommand{\C}{{\mathbb C}}
\newcommand{\Lin}{\mathcal{L}}
\newcommand{\Hin}{\mathcal{H}}
\newcommand{\Win}{\mathcal{W}}
 \renewcommand{\geq }{\geqslant}
 \renewcommand{\leq }{\leqslant}
\DeclarePairedDelimiter{\abs}{\lvert}{\rvert}
\DeclarePairedDelimiter{\norma}{\lVert}{\rVert} 
\newcommand{\Rn}{{\mathbb R^n}}
\newenvironment{sistema}%
{\left\lbrace\begin{array}{@{}l@{}}}%
{\end{array}\right.}  
\numberwithin{equation}{section}
\newtheorem{theorem}{Theorem}[section]
\newtheorem{corollary}[theorem]{Corollary}
\newtheorem{lemma}[theorem]{Lemma}
\newtheorem{proposition}[theorem]{Proposition}
\theoremstyle{definition} 
\newtheorem{definition}[theorem]{Definition}
\newtheorem{remark}[theorem]{Remark}
\begin{document} 

\title[$H^2$-scattering for fourth order NLS]{$H^2$-scattering for systems of weakly coupled fourth-order NLS equations in low space dimensions}


\author{M.~Tarulli}
\address{Faculty of Applied Mathematics and Informatics, Technical University of Sofia, Kliment Ohridski Blvd. 8, 1000 Sofia and IMI--BAS, Acad. Georgi Bonchev Str., Block 8, 1113 Sofia, Bulgaria}
\email{mta@tu-sofia.bg}

\subjclass[2010]{35J10, 35Q55, 35G50, 35P25.}
\keywords{Nonlinear fourth-order Schr\"odinger systems, bilaplacian, scattering theory, weakly coupled equations}


\begin{abstract}
We prove large-data scattering and existence of wave operators in the energy space for the systems of $N$ defocusing fourth-order Schr\"odinger equations with mass-supercritical and energy-subcritical power-type nonlinearity. In addition,
new nonlinear interaction Morawetz identities and inequalities are given, suitable to shed lights on the decay of the solution with respect some Lebesgue norms when the space dimensions are $d=3,4$.
\end{abstract}

\maketitle

\section{Introduction}\label{sec:introduction}

The main target of the paper is the analysis of the decaying and scattering properties of the solution to the following system of $N\geq 1$ defocusing nonlinear fourth-order Schr\"odinger equations (NL4S) in dimension $d \geq 3 $:
\begin{equation}\label{eq:nls}
\begin{cases}
i\partial_t u_\mu + (\Delta^2-\kappa\Delta )u_\mu +
\displaystyle{\sum_{\substack{\nu=1 }}^N}
G(u_\mu, u_\nu)=0, 
\\
(u_\mu(0,\cdot))_{\mu=1}^N= (u_{\mu,0})_{\mu=1}^N \in H^2(\R^d)^N,
\end{cases}
\end{equation}
\begin{equation}\label{eq.nonlst}
G(u_\mu, u_\nu)=
\beta_{\mu\nu}|u_\nu|^{p+1}|u_\mu|^{p-1}u_\mu+\displaystyle{\sum_{\substack{\mu=1 }}^N}
\lambda_{\mu\nu}|u_\nu|^{p+1}|u_\mu|^{p-1}u_\mu
\end{equation}
and $\kappa=0,1$. Here, for all $\mu,\nu=1,\dots,N$, $u_\mu=u_\mu(t,x):\R\times\R^d\to\C$, $(u_\mu)_{\mu=1}^N=(u_1,\dots, u_N)$ and  $\beta_{\mu\nu}, \lambda_{\mu\nu}\geq0$, with either $\beta_{\mu\mu}\neq 0$ or $\lambda_{\mu\mu}\neq 0$,
 are coupling parameters, as well we require that the nonlinearity parameter $p$ is constrained to the following conditions 
\begin{align}\label{eq:base}
&1 \leq p<p^*(d), \ \ \ \ pd>4,\ \ \ \  p^*(d)=
\begin{cases}
+\infty \ \ \ \ \ \, & \text{if} \ \ \ d=3,4,\\
\frac 4{d-4}  \ \ \ \  \  \  &\text{if} \ \ \ 5\leq d\leq 8.
\end{cases} 
\end{align}
Furthermore, the power nonlinearity $p^*(d)$ is the  $H^2$-critical exponent for the single NL4S in $\R^{d}$, whereas the lower bound $\max(1,  4/d)$ relies on some restrictions due to the well-posedness of \eqref{eq:nls} in the product space $H^2(\R^d)^N$, as we see afterwards in the Remark \ref{beta0}. 
We recall also that there are two important conserved quantities of the system \eqref{eq:nls}. Namely, the mass
\begin{align}\label{eq.mass}
  M(u_\mu)(t)=\int_{\R^d}|u_\mu(t)|^2\,dx,
\end{align}
for $\mu=1,\dots,N$ and the energy  
 \begin{equation}\label{eq:energy}
\begin{split}
  E(u_{1},\dots,u_{N})
  \\
 = \int_{\R^d} \sum_{\mu=1}^N \abs{\Delta{u_{\mu}}}^2
 +\kappa \int_{\R^d} \sum_{\mu=1}^N \abs{\nabla{u_{\mu}}}^2
 +\sum_{\mu,\nu=1 }^N
  (\beta_{\mu\nu}+N\lambda_{\mu\nu}) \int_{\R^d}\frac{|u_\mu u_\nu|^{p+1}}{p+1} \,dx.&
  \end{split}
\end{equation}

The fourth-order Schr\"odinger equations are important in several models of mathematical physics. Introduced in \cite{FIP} to describe small dispersion in the propagation of intense laser beams in a medium with Kerr nonlinearity, it was successively used in the context of the theory of motion of a vortex filament in an incompressible fluid in \cite{Kar}, \cite{KaS}, \cite{Se03}, see also \cite{HJ05} and \cite{HJ07}. Motivated by this, here we investigate large-data scattering in $H^2(\R^d)^N$ for \eqref{eq:nls}, in parallel with the case of the single defocusing NL4S
\begin{equation}\label{eq:nle}
  \begin{sistema}
    i \partial_t u + (\Delta^2-\kappa\Delta)u + \abs{u}^{2p}u=0,\\
    u(0)=u_0 \in H^2(\R^d),
  \end{sistema}
\end{equation}
with $u : \R \times \R^d \to \C$, $\kappa=0,1$ and $p>0$, following the seminal ideas unfolded in the papers \cite{CT} for systems of NLS, \cite{Vis} for a simple NLS and proposing some relevant novelties. To be specific, in a first step we carry out new Morawetz identities, interaction Morawetz identities and their corresponding inequalities for \eqref{eq:nls} extending the proofs given in \cite{GS}. Then as a second step, via the localization of the nonlinear part of Morawetz inequalities on space-time slabs, with the space components being chosen as $\R^d$-cubes, we can perform a contradiction argument which enables us to show the decay of $L^q$-norms of the solutions to \eqref{eq:nle} as $t\rightarrow \pm \infty$, as long as $2<q<2d/(d-4)$, for $5\leq d\leq 8$ and $2<q<\infty$ for $d=3,4$ in analogy to \cite{CT} and  \cite{Vis}, actually easing in our framework the proof of such a phenomenon. This particular effect, in combination with a generalization of the theory developed in \cite{Ca} to the systems of NL4S, infers to have asymptotic completeness and existence of the wave operators in the energy space $H^2(\R^d)^N$ for solution to \eqref{eq:nls}. We point out that our result relies on an approach that displays the asymptotics without making a distinction between the number of coupled equations. Indeed, the interaction Morawetz estimates are presented in a suitable form which allows to deal only with its nonlinear part, letting to a new class of correlation-type inequalities. In such a way, by this new kind of nonlinear estimates, the possibility to overcome the mathematical obstacles arises, as well as the opportunity to supply a further simple proof of scattering results attained in \cite{Pa1}, \cite{PaX}, where it is studied the asymptotic behaviour of solutions in the subcritical and energy-critical case and in \cite{GS}, in which it is examined for the first time a system of coupled NL4S. In all these papers, the authors produce a
set of linear Morawetz estimates, guaranteeing that the wave and the scattering 
operators for \eqref{eq:nle}  are well-defined and bijective in the energy-space $H^2(\R^d)$, for  $d\geq 5$, but not fitted to cover the small space dimensions framework.
\\

 Now, we state the main result of this paper, that is 

\begin{theorem}\label{thm:main}
  Let be $3\leq d \leq 8$, $p\in\R$ such that \eqref{eq:base} holds,
  then:
  \begin{itemize}
  \item\emph{(asymptotic completeness)} If $(u_{\mu,0})_{\mu=1}^N \in H^2(\R^d)^N,$ then the unique global solution to \eqref{eq:nls} 
    $(u_{\mu})_{\mu=1}^N \in\mathcal C(\R, H^2(\R^d)^N)$, for $\kappa=0,1$, 
    scatters, i.e. there exist $(u_{\mu,0}^{\pm})_{\mu=1}^N \in H^2(\R^d))^N$ such that for all $\mu=1,\dots,N$ 
    \begin{equation}\label{eq:scattering}
      \lim_{t\to\pm\infty}\left\|u_{\mu}(t,\cdot)-e^{it(\Delta^2-\kappa\Delta)}u_{\mu,0}^\pm(\cdot)\right\|_{H^2}=0.
    \end{equation} 
  \item\emph{(existence of wave operators)} For every $(u_{\mu,0}^{\pm})_{\mu=1}^N \in H^2(\R^d)^N$ there exist unique 
    initial data $(u_{\mu,0})_{\mu=1}^N \in H^2(\R^d)^N,$ such that the
    global solution to \eqref{eq:nls} 
    $(u_{\mu})_{\mu=1}^N \in\mathcal C(\R, H^2(\R^d)^N)$  satisfies
    \eqref{eq:scattering}.
  \end{itemize}

\end{theorem}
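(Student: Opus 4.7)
The plan is to follow the three-step program outlined in the introduction: establish global well-posedness in $H^2(\R^d)^N$, prove the new nonlinear interaction Morawetz inequalities announced in the abstract and extract $L^q$-decay from them, and finally upgrade this decay to $H^2$-scattering and construct the wave operators via a Cazenave-type argument as in \cite{Ca}.

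First I would record global well-posedness. Local existence and uniqueness in $\mathcal{C}([-T,T],H^2(\R^d)^N)$ follow componentwise from Strichartz estimates for the propagator $e^{it(\Delta^2-\kappa\Delta)}$, the restriction $p<p^*(d)$ in \eqref{eq:base} making the nonlinearity $H^2$-subcritical and the lower bound $pd>4$ ensuring that the Duhamel terms are well defined in the product Strichartz space. Since $\beta_{\mu\nu},\lambda_{\mu\nu}\ge 0$, the energy \eqref{eq:energy} is coercive on $H^2$, and together with conservation of the masses \eqref{eq.mass} it yields a uniform $H^2$-bound on the solution, hence global existence.

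The heart of the argument is the interaction Morawetz step. Following the bilaplacian calculus of \cite{GS} but in a correlated/vectorial form as in \cite{CT}, I would introduce the total density $\rho(t,x)=\sum_\mu|u_\mu(t,x)|^2$ and work with the quantity
\begin{equation*}
I_a(t)=\iint_{\R^d\times\R^d} a(x-y)\,\rho(t,x)\,\rho(t,y)\,dx\,dy
\end{equation*}
for a suitable convex weight $a$; differentiating twice in time and exploiting \eqref{eq:nls}, the bilaplacian produces a positive fourth-order quadratic form in the gradients of $\rho$ together with a nonlinear bulk term of the shape
\begin{equation*}
\sum_{\mu,\nu}(\beta_{\mu\nu}+N\lambda_{\mu\nu})\iint \Delta a(x-y)\,|u_\mu(t,x)u_\nu(t,x)|^{p+1}\,\rho(t,y)\,dx\,dy,
\end{equation*}
plus error terms that can be absorbed using the $H^2$-bound. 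Since $|I_a(t)|_{-T}^{T}|$ is bounded uniformly in $T$ thanks to that same bound, the nonlinear bulk becomes a global space-time estimate; in $d\ge 5$ one may take $a(x)=|x|$, while in $d=3,4$ a regularized weight is needed, yielding after localization on $\R^d$-cubes the correlation-type inequalities alluded to in the introduction.

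Once this global Morawetz bound is in hand, $L^q$-decay of each component follows by the contradiction/pigeonhole argument of \cite{Vis,CT}: if $\|u_\mu(t_n)\|_{L^q}\ge\delta>0$ along a sequence $t_n\to\pm\infty$, Strichartz continuity propagates the lower bound on a short slab around each $t_n$, and choosing $t_n$ sufficiently sparse accumulates infinitely many disjoint positive contributions to the localized Morawetz bound, contradicting its finiteness. Plugging the resulting decay $\|u_\mu(t)\|_{L^q}\to 0$ into the Duhamel formula and applying Strichartz estimates shows that $e^{-it(\Delta^2-\kappa\Delta)}u_\mu(t)$ is Cauchy in $H^2$ as $t\to\pm\infty$, whose limit furnishes the asymptotic state $u_{\mu,0}^\pm$ and gives \eqref{eq:scattering}. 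The existence of wave operators is then standard: given $(u_{\mu,0}^\pm)$, I would solve the integral equation on $[T,\infty)$ by contraction in the product Strichartz space with $T$ so large that the linear evolution of the data has small spacetime norm, then extend the solution to all of $\R$ by the global theory of the first step. The main obstacle I anticipate is the Morawetz step in the low dimensions $d=3,4$: the standard weight $|x|$ is too weak to generate a useful nonlinear term, so the right choice of interaction weight, and a regularization whose error terms remain absorbable by the conserved energy, is the delicate point that has to be handled carefully.
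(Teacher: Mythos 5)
Your proposal is correct and follows essentially the same architecture as the paper: global well-posedness from Strichartz plus the conserved mass and coercive energy, an interaction Morawetz inequality whose nonlinear bulk term carries the coefficients $\gamma_{\mu\nu}=\beta_{\mu\nu}+N\lambda_{\mu\nu}$, the localized contradiction argument of \cite{Vis,CT} for $L^q$-decay, a Strichartz bootstrap giving $w\in L^q(\R,\Win^{2,r}_x)$, and then the Cauchy property of $e^{-it(\Delta^2-\kappa\Delta)}w(t)$ together with a standard contraction for the wave operators. Two points where your plan diverges from the paper's execution are worth noting. First, you set up the Morawetz step by differentiating the mass--mass correlation $I_a(t)=\iint a(x-y)\rho(x)\rho(y)$ twice in time (the Planchon--Vega viewpoint), whereas the paper differentiates once the momentum--mass action $\mathcal M(t)=2\sum_{\mu,\iota}\iint j_{u_\mu}(x)\cdot\nabla_x\psi\, m_{u_\iota}(y)$; these are equivalent, since $\dot I_a$ is precisely $\mathcal M$ up to constants, so nothing is lost. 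Second, the obstacle you single out as delicate --- that in $d=3,4$ the weight $|x|$ would be ``too weak'' and need regularization --- does not arise in the paper's treatment: the weight $\psi(x,y)=|x-y|$ is used in all dimensions $d\ge 3$, the nonlinear term comes with $\Delta_x|x-y|=(d-1)/|x-y|>0$ and hence the right sign everywhere, and in $d=3$ the identity $\Delta_x^2|x-y|=-4\pi\delta_{x=y}$ is exploited directly to produce the diagonal estimate \eqref{eq:stima}; the genuinely fourth-order difficulty is instead the sign of the terms $D^2u\,D^2\phi\,D^2\bar u$ and $\nabla u\,D^2\Delta\phi\,\nabla\bar u$, which is what the structural hypotheses \eqref{eq.bbelow2}--\eqref{eq.bbelow} (verified for $\phi=|x|$ following \cite{LS,MWZ}) are designed to control. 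With that substitution your argument closes exactly as the paper's does.
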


\begin{remark}\label{beta0} 
We observe, as aforementioned, that the study of the systems is forbidden in dimension $d\geq 8$
since a lack of an existence theorem like Proposition \ref{ConsLaw}. More precisely, if for $\mu\neq\nu$, some of the parameters $\beta_{\mu\nu}$ and $\lambda_{\mu\nu}$  are not vanishing in \eqref{eq:nls}, we 
  are obligated to assume $p\geq1$ because of the structure of the nonlinearity \eqref{eq.nonlst}. Anyway, in the simple case $\beta_{\mu\nu}=\lambda_{\mu\nu}=0$ 
  for all $\mu\neq\nu$, we are no longer forced to impose further lower bounds than $p>4/d$ and consequently, as a side effect of this paper, we get decay w.r.t. Lebesgue norms for $0<p<4/(d-4)$ and scattering for $4/d<p<4/(d-4)$ to the solution of the problem \eqref{eq:nle} in all dimensions $d \geq 3$. 
  \end{remark}
  
In light of that we have

\begin{corollary}\label{thm:main2}
  Let be $d\geq 3$, $N=1$ and $\frac4d<p<\frac{4}{d-4}$ 
  then, if $u_0=u_{1,0} \in H^2(\R^d),$ then the unique global solution to \eqref{eq:nls}
  $u=(u_{1}) \in\mathcal C(\R, H^2(\R^d))$, for $\kappa=0,1$,  is such that:
  \begin{itemize}
  \item the decay property,
    \begin{align}\label{eq:decay0}
\lim_{t\rightarrow \pm \infty} \|u(t, \cdot)\|_{L^q}=0,
\end{align}
is fulfilled with $0<p<\frac{4}{d-4}$, $2<q<\frac{2d}{d-4},$ for $d\geq 5$, and with $0<p<+\infty$, $2<q<+\infty,$ for $d=3, 4$;
    \item
   if $\frac4d<p<\frac{4}{d-4}$, for  $d\geq 5$ and $\frac4d<p<+\infty$, for $d=3, 4$ the scattering occurs, i.e. there exist $u_0^{\pm}=u_{1,0}^{\pm} \in H^2(\R^d))$ such that  
    \begin{equation}\label{eq:scattering0}
      \lim_{t\to\pm\infty}\left\|u(t,\cdot)-e^{it(\Delta^2-\kappa\Delta)}u_{0}^\pm(\cdot)\right\|_{H^2}=0.
    \end{equation}
    \end{itemize}
\end{corollary}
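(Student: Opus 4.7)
The plan is to view Corollary \ref{thm:main2} as the $N=1$ specialization of Theorem \ref{thm:main}, with the sole additional observation from Remark \ref{beta0} that when no genuine coupling is present the lower threshold $p\geq 1$ built into \eqref{eq:base} can be relaxed to $p>0$ for the decay and to $p>4/d$ for scattering. First I would check that all ingredients invoked in the proof of Theorem \ref{thm:main} remain valid in this broader range: global well-posedness in $H^2(\R^d)$, mass and energy conservation \eqref{eq.mass}--\eqref{eq:energy}, and the Strichartz estimates for $e^{it(\Delta^2-\kappa\Delta)}$ do not require $p\geq 1$ in the scalar defocusing setting because the nonlinearity $|u|^{2p}u$ only needs to be differentiated once in space/time in all subsequent computations.

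Second, I would invoke the nonlinear interaction Morawetz identity/inequality derived earlier in the paper, which in the $N=1$ case produces a global space-time bound on a nonlinear quantity of the form $\int\!\!\int |u(t,x)|^{2(p+1)}\,dx\,dt$ (or the corresponding convolution quantity), controlled by the conserved mass and energy. This bound is available for the whole range $0<p<4/(d-4)$ ($d\ge 5$) and $p>0$ ($d=3,4$), since its derivation does not use any lower bound on $p$.

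Third, to extract \eqref{eq:decay0}, I would localize the Morawetz bound on space-time slabs of the form $I_n\times Q_n$, with $Q_n$ an $\R^d$-cube, and run the contradiction argument already used in \cite{CT} and \cite{Vis}: if $\|u(t_n,\cdot)\|_{L^q}\geq \delta>0$ along some sequence $t_n\to\pm\infty$ with $2<q<2d/(d-4)$ (resp. $2<q<\infty$ for $d=3,4$), then combining interpolation with conservation of $M(u)$ and $E(u)$ produces persistent nontrivial $L^{2(p+1)}$-mass of $u$ on a suitable cube, which contradicts the finiteness of the localized Morawetz functional. This yields \eqref{eq:decay0} in the advertised $p$-range.

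Finally, to upgrade the decay to scattering in $H^2(\R^d)$ in the range $\tfrac{4}{d}<p<\tfrac{4}{d-4}$ (resp. $p>4/d$ for $d=3,4$), I would follow the Cazenave-type argument of \cite{Ca}, adapted exactly as in the proof of Theorem \ref{thm:main}: the decay \eqref{eq:decay0} feeds a bootstrap on Strichartz norms of $u$ which produces a global space-time $L^a_tL^b_x$ bound on the nonlinear term, and from there the standard computation shows that $e^{-it(\Delta^2-\kappa\Delta)}u(t)$ is Cauchy in $H^2$ as $t\to\pm\infty$, giving the scattering states $u_0^{\pm}$ and \eqref{eq:scattering0}. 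The step I expect to be the most delicate is handling $4/d<p<1$ when $d\geq 5$, since there the nonlinearity $|u|^{2p}u$ is merely $C^1$; however, because in the single-equation case all intermediate estimates only call for one derivative of the nonlinearity paired with the conserved Sobolev norm, this difficulty is essentially cosmetic, and the proof goes through exactly as in Theorem \ref{thm:main}.
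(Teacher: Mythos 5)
Your proposal follows essentially the same route as the paper: Corollary \ref{thm:main2} is presented there as a direct consequence of Remark \ref{beta0} combined with the $N=1$ instances of the interaction Morawetz bounds (Proposition \ref{dim3}), the localized contradiction argument giving decay (Proposition \ref{decay}), and the Strichartz bootstrap upgrading decay to scattering (Lemma \ref{StriSys} and the proof of Theorem \ref{thm:main}), which is precisely the chain of steps you describe, including the observation that the lower bound $p\geq1$ is only needed for the genuinely coupled terms.
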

\begin{remark}\label{1eqscatt}
Considering the decay of the $L^q$-norm, we underline that the above \eqref{eq:decay0}, formulated in Theorem \ref{thm:main2}, were originally established in \cite{PaX}, but with limitation to $p>\frac 4d$ (for small-data setting also). We emphasize here that our technicalities extend the result to the range to $p\leq \frac 4d$ which represents the novelty for the case of a single NL4S. For what regards the scattering result \eqref{eq:scattering0}, it appears in \cite{PaX}, for the dimensions $1\leq d\leq 4$. However a techniques from Kenig and Merle (see \cite{KM}) in combination with virial-type ingredient is employed to balance the absence of classical Morawetz-type estimates. Our method is an alternative and easier way to achieve the same results in dimensions $d=3,4$, by using new 
nonlinear Morawetz-type inequalities generalized to the systems background.
\end{remark}

Looking at the literature, we end shortly by recalling some of the known general achievements linked to the problem \eqref{eq:nle}, either for $\kappa=0$ or $\kappa=1$, with the recommendation to look at references therein.
In \cite{BKS00} dispersive estimates for the biharmonic Schr\"odinger operator are given  which infer to the Strichartz estimates for the fourth-order Schr\"odinger equations. In \cite{Se06} the author displays one dimensional modified scattering for cubic nonlinearity.  The paper \cite{PaSh} contains the scattering analysis for the mass critical case in high dimensions. We refer also to the important work \cite{PaX} (see Remark \ref{1eqscatt}), besides the already cited papers \cite{Pa1}, \cite{Pa2}, while as far as we know scattering results are not available in the systems set-up with the exception of the also mentioned paper \cite{GS}. Additionally, we look back to the fact that the Morawetz multiplier method and the resulting estimates are mandatory to examine scattering properties for other nonlinear dispersive equations. These estimates were achieved initially in \cite{Morawetz} for the nonlinear Klein-Gordon equation and then used for retrieving the asymptotic completeness of the NLS in various paper, for example we remand to \cite{GiVel2} \cite{LinStrauss}. In the recent period the interaction Morawetz estimates, a powerful approach that is  based on getting bilinear Morewetz inequalities, played a crucial role in simplifying the proof of scattering. We quote in this direction the papers  \cite{CGT}, \cite{CKSTT1}, \cite{CKSTT2}, \cite{GiVel} and the remarkable paper
\cite{PlVe} where the interaction Morawetz estimates involving only the Laplacian of the Morawetz multipliers appear for the $L^2$-supercritical and $H^1$-subcritical NLS. We quote also the paper \cite{TzVi}, where the interaction Morawetz technique is extended to turn out the the scattering in $H^1$ for NLS posed on the product-type geometry $\R^d\times\T,$ with $d\geq1$.

\subsection*{Outline of paper.} Along Section \ref{InterMor} we set-up, in Lemma \ref{lem:mor} and Lemma \ref{lem:intmor},
the interaction Morawetz identities 
and inequalities respectively and  in Propositions \ref{dim3}, the connected nonlinear Morawetz estimates
for the system of NL4S \eqref{eq:nls}, fundamental tools for proving Theorem \ref{thm:main}. 
The Section \ref{MainThm} is splitted in two part: in the first we show how the interactive Morawetz inequalities 
enable to exploit the decay of some Lebesgue-norms of the solutions to \eqref{eq:nls}, this is included in Proposition \ref{decay}, having its peculiar interest; in the second we explore the existence of scattering states and wave 
operators by using the result acquired in the first part, completing the proof of Theorem \ref{thm:main}.

\subsection*{Notations.}
We recall that $1\leq r' \leq \infty$ is the H\"older conjugate exponent of any given $1\leq r\leq\infty$. We denote by $L_x^r$ the Lebesgue space $L^r(\Rn)$, and respectively by $W^{2,r}_x$ and  $H^2_x$ the  inhomogeneous Sobolev spaces $W^{2,r}(\Rn)$ and $H^2(\Rn)$ (for more details see \cite{Ad}).
We introduce, for $N\in \N $,  the Lebesgue space $ \Lin_x^r= L^r(\Rn)^N$ and   
the Sobolev spaces by $\Win^{2,r}_x=W^{2,r}(\Rn)^N$ and $\Hin^2_x=H^2(\Rn)^N$, respectively.  We also utilize the symbol $\mathcal D_x$ (resp. $\mathcal D_y$) to make unambiguous the dependence w.r.t. $x$ (resp. $y$) variable of a general differential operator $\mathscr D$.

\section{Morawetz and interaction Morawetz identities}\label{InterMor}
The main aim of this section is to pursue the basic tools for the proof of our main theorem:
the Morawetz-type identities, that are close to the ones holding for the single NLS4. 
We find necessary to introduce the following notations: for any given function $f\in H^2(\R^d,\C)$, we denote by
\begin{equation}\label{eq:notation}
  m_f(x):=|f(x)|^2,
  \qquad
   j_f(x):=\Im\left[\overline{f}\nabla f(x)\right],
\end{equation}
that are the density mass and the density momentum, respectively. In addition, from now on we drop the variable $t$ for simplicity, expressing it only where needed. We have
\begin{lemma}[Morawetz]\label{lem:mor}
Let $d\geq1$, and $(u_\mu)_{\mu=1}^N \in\mathcal C(\R,H^2(\R^d)^N)$ be a global solution to system \eqref{eq:nls},
 let $\phi=\phi(x):\R^d\to\R$ be a sufficiently regular and decaying function, and introduce the action given by
\begin{equation*}
 M(t)
=2\sum_{\mu=1}^N \int_{\R^d} j_{u_\mu}(x)  \cdot\nabla\phi(x)\,dx.
\end{equation*}
The following identity holds:
\begin{align}\label{eq:mor2}
\dot M(t)&
\\
=\sum_{\mu=1}^N\int_{\R^d} m_{u_\mu}(x)(-\Delta^3\phi(x)+\kappa \Delta^2\phi(x))+2\Delta^2\phi(x)|\nabla u_\mu(x)|^2\,dx&
\nonumber
\\
+\sum_{\mu=1}^N\left[
4 \int_{\R^d}\nabla u_{\mu}(x)D^2(\Delta-\kappa)\phi(x)\cdot\nabla \overline u_{\mu}(x)\,dx
\right]&
\nonumber
\\
-\sum_{\mu=1}^N\left[8\int_{\R^d}D^2 u_{\mu}(x)D^2\phi(x)D^2 \overline u_{\mu}(x)\,dx\right]&
\nonumber
\\
-\frac{2p}{p+1}
\sum_{\mu,\nu=1}^N \gamma_{\mu\nu}\int_{\R^d}|u_{\mu}(x)|^{p+1}|u_{\nu}(x)|^{p+1}\Delta\phi(x)\,dx ,&
\nonumber
\end{align}
with $\gamma_{\mu\nu}=\beta_{\mu\nu}+N\lambda_{\mu\nu}$, for all $\mu,\nu=1,\dots N$, $\kappa=0,1$, where $D^2\phi\in\mathcal M_{n\times n}(\R^d)$ is the hessian matrix of $\phi$, $\Delta^2\phi=\Delta(\Delta\phi)$
and $\Delta^3\phi=\Delta\Delta(\Delta\phi)$ the second and third power of the Laplace operator, respectively.
\end{lemma}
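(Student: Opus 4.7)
The plan is to differentiate $M(t)$ under the integral sign, substitute the evolution law from \eqref{eq:nls} in the form $i\partial_t u_\mu = -(\Delta^2-\kappa\Delta)u_\mu - \sum_\nu G(u_\mu,u_\nu)$, and then migrate every derivative from $u_\mu$ onto the multiplier $\phi$ via repeated integration by parts. Starting from the pointwise identity
\[
\partial_t j_{u_\mu,k} = \Im\bigl[\partial_t\overline{u_\mu}\,\partial_k u_\mu + \overline{u_\mu}\,\partial_k\partial_t u_\mu\bigr]
\]
and using $\Im(iz)=\Re(z)$, the quantity $\dot M(t)$ naturally splits into a linear piece (generated by $\Delta^2-\kappa\Delta$) and a nonlinear piece (generated by $G$), which I would treat in turn.

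For the linear biharmonic contribution one is led to
\[
2\sum_\mu \int \Re\!\left[\overline{u_\mu}\,\nabla\Delta^2 u_\mu - \Delta^2\overline{u_\mu}\,\nabla u_\mu\right]\cdot\nabla\phi\,dx,
\]
which I would rewrite by four successive integrations by parts so that all derivatives land on $\phi$. Organizing the resulting terms produces exactly the four summands appearing in \eqref{eq:mor2}: the mass-type term $-\Delta^3\phi\,|u_\mu|^2$, the gradient-type term $2\Delta^2\phi\,|\nabla u_\mu|^2$, the Hessian-weighted contribution $4\,\nabla u_\mu\,D^2(\Delta\phi)\,\nabla\overline{u_\mu}$, and the positive quadratic form $-8\,D^2 u_\mu D^2\phi D^2\overline{u_\mu}$ in the second derivatives. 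The Laplacian piece (nontrivial only for $\kappa=1$) is handled by the classical NLS Morawetz computation and accounts for the $+\kappa\Delta^2\phi\,|u_\mu|^2$ summand together with a $-4\kappa\,\nabla u_\mu D^2\phi\,\nabla\overline{u_\mu}$ correction, which is absorbed into the $D^2(\Delta-\kappa)\phi$ factor in the final identity.

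For the nonlinear piece the analogous substitution yields
\[
-2\sum_{\mu,\nu}\int \Re\!\left[\overline{u_\mu}\,\nabla G(u_\mu,u_\nu) - \overline{G(u_\mu,u_\nu)}\,\nabla u_\mu\right]\cdot\nabla\phi\,dx.
\]
Since $G(u_\mu,u_\nu)$ equals $u_\mu$ multiplied by a real-valued function of $|u_\mu|^2$ and $|u_\nu|^2$, the bracket reduces to a total gradient of a real density; one integration by parts then transfers it to $\phi$ and produces $\Delta\phi$ multiplying $|u_\mu|^{p+1}|u_\nu|^{p+1}$. Exploiting the symmetry $\mu\leftrightarrow\nu$ in the double sum to combine the $\beta_{\mu\nu}$ and $\lambda_{\mu\nu}$ contributions gives the coefficient $\gamma_{\mu\nu}=\beta_{\mu\nu}+N\lambda_{\mu\nu}$, while the numerical constant $\tfrac{2p}{p+1}$ emerges from differentiating $|u_\mu|^{p+1}|u_\nu|^{p+1}$ and collecting the two symmetric pieces.

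The main obstacle is the bookkeeping of the biharmonic integrations by parts: several dozen intermediate terms appear, and one has to identify carefully which combinations organize themselves into traces of products of Hessians ($D^2 u_\mu D^2\phi D^2\overline{u_\mu}$) as opposed to those collapsing either to $|\nabla u_\mu|^2\Delta^2\phi$ or to $\nabla u_\mu D^2(\Delta\phi)\nabla\overline{u_\mu}$. Once these algebraic regroupings are performed consistently the identity \eqref{eq:mor2} drops out; no further analytical input is needed beyond enough regularity and decay of $\phi$ to justify all the integrations by parts, which is built into the hypotheses.
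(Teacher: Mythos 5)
Your proposal follows essentially the same route as the paper's proof: differentiate the action, substitute the equation, and migrate all derivatives onto $\phi$ by repeated integration by parts, treating the biharmonic, the $\kappa\Delta$, and the nonlinear contributions separately and using the $\mu\leftrightarrow\nu$ symmetrization of the double sum to produce the coefficients $\gamma_{\mu\nu}$ and $\tfrac{2p}{p+1}$. The only blemish is a sign inconsistency between your linear and nonlinear pieces (since $\Delta^2 u_\mu$ and $G$ enter the equation with the same sign, both commutator brackets $\Re[\overline{u_\mu}\nabla(\cdot)-\overline{(\cdot)}\nabla u_\mu]$ should carry the prefactor $+2$); this is a bookkeeping slip that does not affect the method, and the final identity you arrive at is the correct one.
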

\begin{proof}
 Here we follow the spirit of the paper \cite{CT}. For the case of a single NL4S and with $\kappa=0$ we remand also to \cite{MWZ}.  We prove the identities for a Schwartz solution $(u_\mu)_\mu$, allowing the case 
  $(u_\mu)_{\mu=1}^N \in\mathcal C(\R,H^2(\R^d)^N)$ by a density argument 
  (we refer, for instance, to \cite{Ca} or \cite{GiVel}).
  By an integration by parts and utilizing the equation \eqref{eq:nls}, we have for every fixed $\mu$,
  \begin{align}\label{eq:dausare1}
 2\partial_t \int_{\R^d} j_{u_\mu}(x)\cdot \nabla \phi(x) \,dx & \\
    \qquad =- 2\Im  \int_{\R^d} \partial_t u_{\mu}(x) [\Delta \phi(x)\bar u_{\mu}(x)+2\nabla \phi(x)\cdot \nabla \bar u_{\mu}(x)]\,dx& \nonumber\\
    \qquad=2\Re \int_{\R^d} i \partial_t u_{\mu}(x) [\Delta \phi(x)\bar u_{\mu}(x)+2\nabla \phi(x)\cdot \nabla \bar u_\mu(x)]\,dx&
    \nonumber\\ 
    \qquad=2\Re \int_{\R^d} \Big[-\Delta^2 u_{\mu}(x) +\kappa\Delta u_{\mu}(x)+ 
    \sum_{\nu=1}^N \lambda_{\mu\nu}\abs{u_\nu(x)}^{p+1}\abs{u_\mu(x)}^{p-1} u_\mu(x)\Big]&
    \nonumber\\
    \qquad\qquad\qquad\qquad\cdot[\Delta \phi(x)\bar u_\mu(x)+2\nabla \phi(x)\cdot \nabla \bar u_\mu(x)]\,dx.&
    \nonumber
  \end{align}
  First we get (we refer to \cite{CT}, for example)
  \begin{align}\label{eq:dausare2}
    2\Re\int_{\R^d} \kappa\Delta u_{\mu}(x) [\Delta\phi(x)\bar u_{\mu}(x)+2\nabla \phi(x)\cdot \nabla \bar u_{\mu}(x)]\,dx\\
    =  \int_{\R^d} \kappa\Delta^2 \phi(x) \abs{u_{\mu}(x)}^2\,dx -4\kappa \int_{\R^d} \nabla u_{\mu}(x) D^2 \phi(x)\nabla \bar u_{\mu}(x)\,dx.
    \nonumber
  \end{align}
  In addition we have also
  \begin{align}\label{eq:dausare2a}
    2\Re\int_{\R^d} -\Delta^2 u_{\mu}(x) [\Delta\phi(x)\bar u_{\mu}(x)+2\nabla \phi(x)\cdot \nabla \bar u_{\mu}(x)]\,dx&
    \\
    = - \int_{\R^d} \Delta^2 \phi(x) \Delta\abs{u_{\mu}(x)}^2\,dx + 2\int_{\R^d} \Delta\phi(x) |\nabla u_{\mu}(x)|^2\,dx&
    \nonumber\\
    -2\Re\int_{\R^d} \Delta u_{\mu}(x)\nabla \Delta \phi(x) \cdot \nabla \bar u_{\mu}(x)\,dx +2\Re\int_{\R^d} \nabla  \Delta u_{\mu}(x)\Delta \phi(x) \cdot \nabla \bar u_{\mu}(x)\,dx&
    \nonumber\\
    -8\Re\int_{\R^d} D^2 u_{\mu}(x) D^2 \phi(x)D^2 \bar u_{\mu}(x)\,dx&
    \nonumber\\
    -4\Re\int_{\R^d} D^2 u_{\mu}(x)\nabla D^2 \phi(x) \cdot \nabla \bar u_{\mu}(x)\,dx +2\int_{\R^d}   \Delta \phi(x) |D^2 u_{\mu}|^2\,dx.&
\nonumber
 \end{align}
 By applying now the following
\begin{equation*}
\begin{split}
  -\Re\int_{\R^d} \Delta u_{\mu}(x)\nabla \Delta \phi(x) \cdot \nabla \bar u_{\mu}(x)\,dx +\Re\int_{\R^d} \nabla  \Delta u_{\mu}(x)\Delta \phi(x) \cdot \nabla \bar u_{\mu}(x)\,dx&\\
  =  \int_{\R^d} \nabla u_{\mu}(x)D^2 \Delta \phi(x) \cdot \nabla \bar u_{\mu}(x)\,dx-\int_{\R^d}   \Delta \phi(x) |D^2 u_{\mu}|^2\,dx&
  \end{split}
 \end{equation*}
 and
  $$
 2\Re\int_{\R^d} D^2 u_{\mu}(x)\nabla D^2 \phi(x) \cdot \nabla \bar u_{\mu}\,dx =-\int_{\R^d}\nabla u_{\mu}(x)D^2  \Delta \phi(x) \nabla \bar u_{\mu}(x)\,dx,
 $$
we can see that the r.h.s. of the above identity \eqref{eq:dausare2a} is equivalent to
 \begin{align}\label{eq:dausare2b}
    - \int_{\R^d} \Delta^3 \phi(x) \abs{u_{\mu}(x)}^2\,dx + 2\int_{\R^d} \Delta\phi(x) |\nabla u_{\mu}(x)|^2\,dx&
  \\
    +4\int_{\R^d}\Delta u_{\mu}(x)D^2  \Delta \phi(x) \nabla \bar u_{\mu}(x)\,dx 
    -8\int_{\R^d} D^2 u_{\mu}(x) D^2 \phi(x)D^2 \bar u_{\mu}(x)\,dx.
\nonumber
 \end{align}
Moreover, if we indicate by
$$
X_{\mu\nu}= 2 \Re \int_{\R^d} \abs{u_\nu}^{p+1}\abs{u_\mu}^{p-1} u_\mu(x)
    \cdot[\Delta \phi(x)\bar u_\mu(x)+2\nabla \phi(x)\cdot \nabla \bar u_\mu(x)]\,dx
$$
and 
$$
Y_{\mu\nu}=\frac{2p}{p+1}
\int_{\R^d}|u_{\mu}(x)|^{p+1}|u_{\nu}(x)|^{p+1}\Delta\phi(x)\,dx ,
$$
we claim that
\begin{align}\label{eq:dausare30a}
   X_{\mu\nu}=Y_{\mu\nu}.
\end{align}
In fact, we have the following chain of identities
\begin{equation}
  \begin{split}
  2  \Re  \int_{\R^d} \abs{u_\mu u_\nu}^{p+1}\Delta \phi(x)
    +  2\nabla \phi(x) \cdot \frac{\nabla\abs{u_\mu}^{p+1}}{p+1} \abs{u_\nu}^{p+1}\,dx&\\
    =\, 2 \Re  \int_{\R^d} \abs{u_\mu u_\nu}^{p+1}\Delta \phi(x) 
    +\nabla \phi(x) \cdot \frac{\nabla(\abs{u_\mu}^{p+1}  \abs{u_\nu}^{p+1})}{p+1} \,dx & \\
    =\, 2 \left(1 - \frac{1}{p+1}\right) 
      \Re  \int_{\R^d} \abs{u_\mu u_\nu}^{p+1}\Delta \phi(x)  \,dx,
  \end{split}
  \label{eq:dausare3}
\end{equation}
where in the last equality we used integration by parts. Thus, the above \eqref{eq:dausare30a} infers to the identity
\begin{align}\label{eq:dausare30}
    \sum_{\nu=1 }^N \beta_{\mu\nu}X_{\mu\nu} +  \sum_{\mu,\nu=1 }^N\lambda_{\mu\nu}X_{\mu\nu}
    =   \sum_{\nu=1 }^N \beta_{\mu\nu}Y_{\mu\nu} +  \sum_{\mu,\nu=1 }^N\lambda_{\mu\nu}Y_{\mu\nu}.
\end{align}
Taking in account \eqref{eq:dausare1}, \eqref{eq:dausare2}, \eqref{eq:dausare2a}, \eqref{eq:dausare2b}, \eqref{eq:dausare30} and by a further sum over $\mu=1,\dots,N$, we get the proof of \eqref{eq:mor2} completed.
\end{proof}

By an application of the above lemma, we can now move to the proof of the interaction Morawetz identities. More precisely, we have

\begin{lemma}[Interaction Morawetz]\label{lem:intmor}
  Let $(u_\mu)_{\mu=1}^N \in\mathcal C(\R, H^2(\R^d)^N)$ be a global solution to system \eqref{eq:nls}, let $\phi=\phi(|x|):\R^d\to\R$ be a convex radial function, regular and decaying and so that, for any $f\in \C^d$,
 enjoys
 \begin{equation}\label{eq.bbelow2}
D^2 fD^2\phi(|x|)D^2\overline f\geq C_1\rho_1 (|x|)|\nabla_v^{\bot}f|^2
\end{equation}
 and
 \begin{equation}\label{eq.bbelow}
\nabla fD^2\Delta\phi(|x|)\nabla\overline f\leq -C_2\left(\rho_2 (|x|)|\nabla_v^{\bot}f|^2+\rho_3 (|x|)\left|(v\cdot \nabla f)\frac{v}{|v|^2}\right|^2\right),
\end{equation}
with $v\in\R^d$, $\rho_1 (|x|), \rho_2(|x|), \rho_3 (|x|)>0$,  $C_1, C_2>0$
where 
$\nabla_{v}^{\bot}f=\nabla f- (v\cdot \nabla f)v/|v|^2$. Further, let us denote by $\psi=\psi(x,y):=\phi(|x-y|):\R^{2d}\to\R$ and introduce the action
\begin{equation}\label{eq:intmor1}
  \mathcal M(t) =2\sum_{\mu,\iota=1}^N\int_{\R^d\times\R^d}j_{u_\mu}(x)\cdot\nabla_x\psi(x,y) \, m_{u_\iota}(y)\,dxdy.
\end{equation}
Then the following holds:
  \begin{align}
  \mathcal{ \dot M}(t) \label{eq:intmor2}
   \leq
  2\sum_{\mu,\iota=1}^N\int_{\R^d\times\R^d}
  \Delta^2_x\psi(x,y)K(t,x,y)\,dxdy&
\nonumber\\
-\frac {4p}{p+1}\sum_{\substack{\mu,\nu,\iota=1}}^N\gamma_{\mu\nu}\int_{\R^d\times\R^d}
  |u_\mu(x)|^{p+1}|u_\nu(x)|^{p+1}m_{u_\iota}(y)\Delta_x\psi(x,y)\,dxdy,&
  \nonumber\\
\end{align}
with $\gamma_{\mu\nu}=\beta_{\mu\nu}+N\lambda_{\mu\nu}$, for any $\mu,\nu=1,\dots N$ and where 
\begin{align}\label{eq.kernel}
  K(t,x,y)\\
 =\nabla_xm_{u_\mu}(t,x)\cdot\nabla_y m_{u_\iota}(t,y)+\kappa m_{u_\mu}(t,x) m_{u_\iota}(t,y)+2m_{\nabla u_\mu}(t,x) m_{ u_\iota}(t,y).
 \nonumber
\end{align}
\end{lemma}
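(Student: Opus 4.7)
The plan is to differentiate $\mathcal M(t)$ in time and reduce the computation to Lemma~\ref{lem:mor} together with the mass continuity equation for the bilaplacian flow, finally exploiting the geometry of $\psi(x,y)=\phi(|x-y|)$ and the two sign hypotheses \eqref{eq.bbelow2}--\eqref{eq.bbelow} to pass from an identity to the stated inequality.

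First, by the product rule I split
$$\mathcal{\dot M}(t)=2\sum_{\mu,\iota=1}^N\int_{\R^d\times\R^d}\partial_t j_{u_\mu}(x)\cdot\nabla_x\psi(x,y)\,m_{u_\iota}(y)\,dxdy+2\sum_{\mu,\iota=1}^N\int_{\R^d\times\R^d}j_{u_\mu}(x)\cdot\nabla_x\psi(x,y)\,\partial_t m_{u_\iota}(y)\,dxdy=:I_1+I_2.$$
For $I_1$, for each frozen $y$ I apply Lemma~\ref{lem:mor} with test function $\phi(\cdot):=\psi(\cdot,y)$ and then integrate in $y$ against $m_{u_\iota}(y)$, summing in $\iota$. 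This reproduces, with $x$-derivatives of $\psi$ in place of derivatives of $\phi$, the six families of terms in \eqref{eq:mor2}: the contributions $-\Delta_x^3\psi\,m_{u_\mu}$, $\kappa\Delta_x^2\psi\,m_{u_\mu}$, $2\Delta_x^2\psi\,m_{\nabla u_\mu}$, the Hessian terms $4\nabla u_\mu D^2_x(\Delta_x-\kappa)\psi\nabla\bar u_\mu$ and $-8\,D^2u_\mu D^2_x\psi D^2\bar u_\mu$, and the nonlinear term already displayed on the right-hand side of \eqref{eq:intmor2}.

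For $I_2$, I use the mass continuity equation associated with \eqref{eq:nls}: multiplying \eqref{eq:nls} by $\bar u_\iota$, taking imaginary parts, and observing that the gauge-invariant nonlinearity $G(u_\iota,u_\nu)\bar u_\iota$ is real (so its imaginary part vanishes), I obtain $\partial_t m_{u_\iota}+\nabla_y\cdot Q_{u_\iota}=0$ for a suitable higher-order mass current $Q_{u_\iota}$ depending on $u_\iota,\nabla u_\iota,\Delta u_\iota$ and on $\kappa$ (the leading pieces being $-2\kappa\, j_{u_\iota}$ and a bilaplacian correction $2\,\mathrm{Im}(\Delta\bar u_\iota\nabla u_\iota-\bar u_\iota\nabla\Delta u_\iota)$). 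Integrating by parts in $y$ moves $\nabla_y$ onto $\nabla_x\psi$; using $\nabla_y\psi=-\nabla_x\psi$, $D^2_y\psi=D^2_x\psi$ and $\Delta_y\Delta_x\psi=\Delta_x^2\psi$, a further integration by parts in $x$ converts $j_{u_\mu}$ into $\tfrac12\nabla_x m_{u_\mu}$ in the pieces where it is contracted against $\nabla_x\cdot(\cdot)$. This mechanism, after exchanging the roles $(x,\mu)\leftrightarrow(y,\iota)$ between the contributions of $I_1$ and $I_2$ which are symmetric in the double sum, is precisely what recombines the several cross terms into the single correlation kernel $K(t,x,y)$ multiplied by $\Delta_x^2\psi(x,y)$: the piece $\nabla_x m_{u_\mu}\cdot\nabla_y m_{u_\iota}$ comes from the bilaplacian current in $Q_{u_\iota}$, the piece $\kappa\,m_{u_\mu}m_{u_\iota}$ comes from the $\kappa\Delta_x^2\psi$ term of $I_1$, and the piece $2\,m_{\nabla u_\mu}m_{u_\iota}$ is inherited directly from the $2\Delta_x^2\psi|\nabla u_\mu|^2$ term of $I_1$.

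At this stage two families of terms still carry derivatives of $\psi$ of order three or four that are not absorbed into $K\,\Delta_x^2\psi$: the Hessian term $-8\int D^2u_\mu D^2_x\psi D^2\bar u_\mu\,m_{u_\iota}$ and the mixed term $4\int\nabla u_\mu D^2_x\Delta_x\psi\nabla\bar u_\mu\,m_{u_\iota}$ (together with their $I_2$-counterparts). Here the convexity-type hypotheses come in: \eqref{eq.bbelow2} applied with $v=x-y$ ensures the first is nonpositive, while \eqref{eq.bbelow} with the same $v$ ensures the second is nonpositive. Discarding both, and keeping the nonlinear term intact with its coefficient $-\frac{2p}{p+1}\gamma_{\mu\nu}$ doubled by the contribution of $I_2$ through the $x\leftrightarrow y$ symmetry, yields precisely \eqref{eq:intmor2}. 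As in Lemma~\ref{lem:mor}, the computations are first justified for Schwartz solutions and then extended to $\mathcal C(\R,H^2(\R^d)^N)$ by density.

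The main obstacle is the bookkeeping in $I_2$: identifying the correct higher-order mass current $Q_{u_\iota}$ for the bilaplacian operator and then, by repeated integration by parts in both variables combined with the radial identities for $\psi(x,y)=\phi(|x-y|)$, checking that all mixed cross terms collapse into the compact symmetric form of $K(t,x,y)$ stated in \eqref{eq.kernel}, leaving only the two sign-definite error terms that \eqref{eq.bbelow2}--\eqref{eq.bbelow} are designed to kill.
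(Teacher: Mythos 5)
Your overall strategy --- differentiate $\mathcal M(t)$, feed the $\partial_t j_{u_\mu}$ part through Lemma~\ref{lem:mor} with $y$ frozen, handle the $\partial_t m_{u_\iota}$ part through the mass continuity equation, and discard the sign-definite Hessian blocks via \eqref{eq.bbelow2}--\eqref{eq.bbelow} --- is the same as the paper's. However, the bookkeeping contains errors that would prevent the computation from closing, and two essential steps are missing. The first problem is the attribution of the kernel $K$. You correctly record that $I_1$ produces the term $-\int m_{u_\mu}(x)\,\Delta_x^3\psi\, m_{u_\iota}(y)\,dxdy$; in the paper this very term, rewritten via $\partial_{x_k}\psi=-\partial_{y_k}\psi$ and two integrations by parts (display \eqref{eq:equivalence}), \emph{is} the piece $\nabla_xm_{u_\mu}\cdot\nabla_ym_{u_\iota}\,\Delta_x^2\psi$ of $K$. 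You instead claim that this piece comes from the bilaplacian current in $I_2$, and you never say what becomes of the $-\Delta_x^3\psi\,m\,m$ term from $I_1$: as written you would either double-count or leave a sixth-order derivative of $\psi$ unabsorbed. In the paper the fourth-order part of the $\partial_t m$ contribution does not produce $K$ at all --- it is shown to vanish identically ($\mathcal{III}_1=\mathcal{III}_2=0$, via the algebraic identity \eqref{eq:b2f}); this cancellation is not automatic and is nowhere argued in your proposal.

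The second problem is the $\kappa$-part of the current in $I_2$, which produces the cross term $-8\kappa\int j_{u_\mu}(x)\,D^2_{xy}\psi\; j_{u_\iota}(y)\,dxdy$. This term has no sign on its own and is not covered by \eqref{eq.bbelow2} or \eqref{eq.bbelow} (those control $D^2u\,D^2\psi\,D^2\bar u$ and $\nabla u\,D^2\Delta\psi\,\nabla\bar u$, respectively). The paper controls it by grouping it with the two terms $-4\kappa\, m_{u_\iota}(y)\nabla u_{\mu}(x)D^2_x\psi\nabla\bar u_{\mu}(x)$ and $-4\kappa\, m_{u_\mu}(x)\nabla u_{\iota}(y)D^2_y\psi\nabla\bar u_{\iota}(y)$ supplied by Lemma~\ref{lem:mor}, completing the square via the quantities $H_{\mu\iota}$, $G_{\mu\iota}$ and invoking the convexity of $\phi$ to get $\mathcal{II}_1\le 0$. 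Your proposal is silent on this; moreover the remark that integration by parts ``converts $j_{u_\mu}$ into $\tfrac12\nabla_xm_{u_\mu}$'' is not a legitimate move, since $j=\Im(\bar u\nabla u)$ and $\tfrac12\nabla m=\Re(\bar u\nabla u)$ are different objects. Finally, the claim that the nonlinear coefficient is ``doubled by the contribution of $I_2$'' contradicts your own correct observation that the gauge-invariant nonlinearity drops out of $\partial_t m$: the factor $4p/(p+1)$ comes entirely from the prefactor $2$ in the definition of $\mathcal M$ (equivalently, from symmetrizing the $\partial_t j$ contribution in $x\leftrightarrow y$), with $I_2$ contributing nothing nonlinear.
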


\begin{proof}
As before, we prove the identities for a smooth solution $(u_\nu)_{\nu=1}^N$, 
 switching to the general case $(u_\mu)_{\mu=1}^N \in\mathcal C(\R,H^2(\R^d)^N)$ by using a final density argument.
  First, one notices that \eqref{eq:intmor1},  thanks to the symmetry of the function $\psi(x,y)=\phi(|x-y|)$, it is equivalent to  
  \begin{align}\label{eq:mor0}
    \mathcal{ M}(t)&
    \\
    = \sum_{\mu,\iota=1}^N\int_{\R^d\times\R^d}
 m_{u_\iota}(y)j_{u_\mu}(x)\cdot\nabla_x\psi(x,y)+ m_{u_\mu}(x) j_{u_\iota}(y)\cdot\nabla_y\psi(x,y)\,dx\,dy,&
  \nonumber
  \end{align}
  Therefore, we differentiate w.r.t. time variable and get the identity
  \begin{align}
  \mathcal{\dot M}(t)
  =
 - \sum_{\mu,\iota=1}^N
 \Re \int_{\R^d\times\R^d}
 m_{u_\iota}(y) i\partial_t (\overline u_{\mu}(x)\nabla_xu_{\mu}(x))\cdot\nabla_x\psi(x,y)\,dx\,dy
   \nonumber&
    \\
   -\sum_{\mu,\iota=1}^N\Re\int_{\R^d\times\R^d}
  m_{u_\mu}(x)  i\partial_t (\overline u_{\iota}(y)\nabla_yu_{\iota}(y))\cdot\nabla_y\psi(x,y)\,dx\,dy&
    \label{eq:1}
    \\
    -\sum_{\mu,\iota=1}^N \Re \int_{\R^d\times\R^d}
 i\partial_t m_{u_\mu}(x)   \overline u_{\iota}(y)\nabla_yu_{\iota}(y)\cdot\nabla_y\psi(x,y)\,dx\,dy&
 \nonumber\\
  -\sum_{\mu,\iota=1}^N \Re \int_{\R^d\times\R^d} i\partial_t m_{u_\iota}(y)  \overline u_{\mu}(x)\nabla_xu_{\mu}(x)\cdot\nabla_x\psi(x,y)\,dx\,dy&
 \nonumber\\
 :=\mathcal {I}+\mathcal {II}+\mathcal {III}.
    \nonumber
  \end{align}
  Then, by using \eqref{eq:mor2}, 
  the Fubini's Theorem and exploiting  again the
  symmetry of $\psi(x,y)$ we can write
  \begin{align}
  \mathcal {I}=
  -2\sum_{\mu,\iota=1}^N\int_{\R^d\times \R^d}
  m_{u_\mu}(x)m_{u_\iota}(y)\Delta^3_x\psi(x,y)\,dxdy&
   \label{eq:2iniz}
  \\
   -2\sum_{\mu,\iota=1}^N\int_{\R^d\times \R^d}
  (\kappa m_{u_\mu}(x)m_{u_\iota}(y)+2m_{\nabla u_\mu}(t,x) m_{ u_\iota}(t,y))\Delta^2_x\psi(x,y)\,dxdy&
  \nonumber
  \\
   \ \ \ -\frac{4p}{p+1}\sum_{\substack{\mu,\nu,\iota=1\\
  \mu\neq\nu}}^N\gamma_{\mu\nu}\int_{\R^d\times \R^d}
  |u_\mu(x)|^{p+1}|u_\nu(x)|^{p+1}m_{u_\iota}(y)\Delta_x\psi(x,y)\,dxdy,&
  \nonumber
  \end{align}
where the last line of the above \eqref{eq:2iniz} is the sum of all the terms that are consequence of
the nonlinear part of the equation, while the first and the second lines are the sums of terms associated to the linear part of the equation. 
Rearranging the r.h.s. of the first line in \eqref{eq:2iniz} we have
  \begin{equation}\label{eq:equivalence}
    \begin{split}
    &-\sum_{\mu,\iota=1}^N\int_{\R^d\times\R^d} m_{u_\mu}(t,x)m_{u_\iota}(t,y)\Delta_x^3\psi(x,y)\,dxdy\\
    =&\sum_{\mu,\iota=1}^N\int_{\R^d\times\R^d} m_{u_\mu}(t,x)m_{u_\iota}(t,y)\partial_{x_i}\partial_{y_i}\Delta_x^2\psi(x,y)\,dxdy\\
    =&\sum_{\mu,\iota=1}^N\int_{\R^d\times\R^d}\nabla_x m_{u_\mu}(t,x)\cdot\nabla_ym_{u_\iota}(t,y)\Delta_x^2\psi(x,y)\,dxdy,
    \end{split}
  \end{equation}
applying integration by parts (see, for example \cite{PlVe})
and observing that $\partial_{x_k}\psi=-\partial_{y_k}\psi.$
Lastly, we arrive at
\begin{align}\label{eq:2}
 \mathcal{I}= 2\sum_{\mu,\iota=1}^N\int_{\R^d\times \R^d}
  \Delta^2_x\psi(x,y) K(t,x,y)\,dxdy&
  \\
  -\frac{4p}{p+1}\sum_{\mu,\iota=1}^N\gamma_{\mu\mu}\int_{\R^d\times \R^d}
  |u_\mu(x)|^{2p+2}m_{u_\iota}(y)\Delta_x\psi(x,y)\,dxdy&
  \nonumber
  \\
   \ \ \ -\frac{4p}{p+1}\sum_{\substack{\mu,\nu,\iota=1\\
  \mu\neq\nu}}^N\gamma_{\mu\nu}\int_{\R^d\times \R^d}
  |u_\mu(x)|^{p+1}|u_\nu(x)|^{p+1}m_{u_\iota}(y)\Delta_x\psi(x,y)\,dxdy.&
  \nonumber
\end{align}
In addition, by \eqref{eq:mor2} and the Fubini's Theorem we  can set
  \begin{align}
   \mathcal{II} =
  &
4\sum_{\mu,\iota=1}^N\int_{\R^d\times \R^d}
  \nabla u_\mu(x)D^2_x(\Delta_x-\kappa)\psi(x,y)\nabla\overline u_\mu(x)
  m_{u_\iota}(y)\,dxdy
  \nonumber
   \\
  &
   +4\sum_{\mu,\iota=1}^N\int_{\R^d\times \R^d}
  m_{u_\mu}(x)\nabla u_\iota(y)D^2_y(\Delta_y-\kappa)\psi(x,y)\nabla\overline u_\iota(y)
  \,dxdy
  \\
&-16\sum_{\mu,\iota=1}^N\int_{\R^d\times \R^d}D^2 u_{\mu}(x)D^2\psi(x,y)D^2 \overline u_{\mu}(x)m_{u_\iota}(y)\,dxdy
\nonumber
  \\
  &
  -8\kappa\sum_{\mu,\iota=1}^N\int_{\R^d\times \R^d}
  j_{u_\mu}(x)D^2_{xy}\psi(x,y) j_{u_\iota}(y)\,dxdy,
  \nonumber
  \end{align}
here we used, at least at this stage, the symmetry of $D^2(\Delta-\kappa) \psi$ to take out the real part condition in the first two 
terms of the sum on the r.h.s. of the above identity.  Let us now focus on $\mathcal {II}.$ It can be expressed as 
 \begin{align}\label{eq:b1a}
 \mathcal{II}=\sum_{\substack{\mu, \iota=1}}^N A^{\mu\iota},&
\end{align}
where, for each $\mu, \iota=1,...,N$, the $A^{\mu\iota}$ term is defined by the identity
\begin{align}\label{eqb1aI}
A^{\mu\iota} =  \,
  8\int_{\R^d\times\R^d}
 m_{u_{\iota}}(y) \nabla_xu_{\mu}(x)D^2_x(\Delta_x-\kappa)\psi(x,y)\nabla_x\overline u_{\mu}(x)\,dxdy &\\
-16\int_{\R^d\times \R^d}D^2 u_{\mu}(x)D^2\psi(x,y)D^2 \overline u_{\mu}(x)m_{u_\iota}(y)\,dxdy&
\nonumber
  \\
-8\kappa\int_{\R^d\times\R^d}
  j_{u_{\mu}}(x)D^2_{xy}\psi(x,y) j_{u_{\iota}}(y)\,dxdy&
  \nonumber\\
    =\,8\int_{\R^d\times\R^d}
 m_{u_{\iota}}(y) \nabla_xu_{\mu}(x)D^2_x\Delta_x\psi(x,y)\nabla_x\overline u_{\mu}(x)\,dxdy &
 \nonumber\\
 -16\int_{\R^d\times \R^d}D^2 u_{\mu}(x)D^2\psi(x,y)D^2 \overline u_{\mu}(x)m_{u_\iota}(y)\,dxdy&
\nonumber
  \\
 -4\kappa\int_{\R^d\times\R^d}
  m_{u_{\mu}}(x)\nabla_yu_{\iota}(y)D^2_y\psi(x,y)\nabla_y\overline u_{\iota}(y)\,dxdy &\nonumber\\
  - 4\kappa\int_{\R^d\times\R^d}
 m_{u_{\iota}}(y) \nabla_xu_{\mu}(x)D^2_x\psi(x,y)\nabla_x\overline u_{\mu}(x)\,dxdy& \nonumber\\
  -8\kappa\int_{\R^d\times\R^d}
  j_{u_{\mu}}(x)D^2_{xy}\psi(x,y) j_{u_{\iota}}(y)\,dxdy&
  \nonumber\\
  = \mathcal{II}_1+ \mathcal{II}_2.&
  \nonumber
  \end{align}
 We start by dealing with
  \begin{align}\label{eqb1aI2}
 \mathcal{II}_1 =-4\kappa\int_{\R^d\times\R^d}
  m_{u_{\mu}}(x)\nabla_yu_{\iota}(y)D^2_y\psi(x,y)\nabla_y\overline u_{\iota}(y)\,dxdy &\\
  - 4\kappa\int_{\R^d\times\R^d}
 m_{u_{\iota}}(y) \nabla_xu_{\mu}(x)\Delta_x\psi(x,y)\nabla_x\overline u_{\mu}(x)\,dxdy& \nonumber\\
  -8\kappa\int_{\R^d\times\R^d}
  j_{u_{\mu}}(x)D^2_{xy}\psi(x,y) j_{u_{\iota}}(y)\,dxdy.&
  \nonumber
   \end{align}
  Again by means of $\partial_{x_j}\psi = -\partial_{y_j}\psi$, for all $j=1,\dots,n$, one can verifies that the r.h.s. of the identity  \eqref{eqb1aI2}  is equal to
   \begin{align}\label{eq:b1b}
  4\kappa\int_{\R^d\times\R^d} \nabla_{y}u_{\iota}(y)D^2_{xy}\phi(|x-y|)\nabla_{y}\overline u_{\iota}(y)|u_{\mu}(x)|^2\,dxdy&
  \\
  +
  4\kappa\int_{\R^d\times\R^d}  \nabla_{x}u_{\mu}(x)D^2_{xy}\phi(|x-y|)\nabla_{x}\overline u_{\mu}(x)|u_{\iota}(y)|^2\,dxdy&
  \nonumber
  \\
  -8\kappa\int_{\R^d\times\R^d}
  \Im(\overline u_{\mu}(x)\nabla_{x}u_{\mu}(x))D^2_{xy}\phi(|x-y|)
  \Im(\overline u_{\iota}(y)\nabla_{y}u_{\iota}(y))\,dxdy&
  \nonumber
  \end{align}
  and finally to
  \begin{align}\label{eq:b1b2}
  =-4\kappa\int_{\R^d\times\R^d}
 \left(H_{\mu\iota} D^2_{x}\phi(|x-y|)\overline{H}_{\mu\iota}+G_{\mu\iota} D^2_{x}\phi(|x-y|)\overline{G}_{\mu\iota}\right)\,dxdy,&  \end{align}
with
   \begin{align*}
    H_{\mu\iota} 
    &
    :=u_{\mu}(t,x)\nabla_{y}\overline{u_{\mu}(t,y)}+\nabla_{x}u_{\iota}(t,x)\overline{u_{\iota}(t,y)},
    \\
    G_{\mu\iota}
    &
    :=u_{\mu}(t,x)\nabla_{y}u_{\iota}(t,y)-\nabla_{x}u_{\mu}(t,x)u_{\iota}(t,y).
  \end{align*}
  Thus by gathering \eqref{eq:b1a}, \eqref{eq:b1b} and since $\phi$ is a convex function one get $\mathcal{II}_{1}\leq 0$. Furthermore, by the assumptions \eqref{eq.bbelow} and \eqref{eq.bbelow2} one proves
  
   \begin{align}\label{eqb1aI3}
 \mathcal{II}_2 =  8\int_{\R^d\times\R^d}
 m_{u_{\iota}}(y) \nabla_xu_{\mu}(x)D^2_x\Delta_x\psi(x,y)\nabla_x\overline u_{\mu}(x)\,dxdy.&
 \\
 -16\int_{\R^d\times \R^d}D^2 u_{\mu}(x)D^2\psi(x,y)D^2 \overline u_{\mu}(x)m_{u_\iota}(y)\,dxdy&
\nonumber
  \\
\leq -8C\int_{\R^d\times\R^d} (\rho_1(|x-y|)+2\rho_3(|x-y|))|\nabla_v^{\bot}u_{\mu}(x)|^2m_{u_{\iota}}(y)\,dxdy\leq 0.
 \nonumber
   \end{align}
 It remains to control the last term 
   \begin{align}\label{eq:b2aI}
 \mathcal{III}=-2\sum_{\mu,\iota=1}^N \Re \int_{\R^d\times\R^d}
 i\partial_t m_{u_\mu}(x)   \overline u_{\iota}(y)\nabla_yu_{\iota}(y)\cdot\nabla_y\psi(x,y)\,dx\,dy&
\\
  -2\sum_{\mu,\iota=1}^N \Re \int_{\R^d\times\R^d} i\partial_t m_{u_\iota}(y)  \overline u_{\mu}(x)\nabla_xu_{\mu}(x)\cdot\nabla_x\psi(x,y)\,dx\,dy&
 \nonumber\\
  :=\mathcal{III}_1+ \mathcal{III}_2.
  \nonumber
\end{align}
We handle only the first sum of integrals on the r.h.s. of the above equality \eqref{eq:b2aI} because the second one
can be faced with a similar strategy. We observe that
     \begin{align}\label{eq:b2aInew}
 \mathcal{III}_1= 2\sum_{\mu,\iota=1}^N \Re \int_{\R^d\times\R^d}
 i\partial_t m_{u_\mu}(x)   \overline u_{\iota}(y)\nabla_yu_{\iota}(y)\cdot\nabla_y\psi(x,y)\,dxdy&
\\
 = -2\sum_{\mu,\iota=1}^N \Re \int_{\R^d\times\R^d}\overline u_{\mu}(x)\Delta(\Delta-\kappa)u_{\mu}(x)\overline u_{\iota}(y)\nabla_yu_{\iota}(y)\cdot\nabla_y\psi(x,y)\,dx\,dy&
 \nonumber\\
 + 2\sum_{\mu,\iota=1}^N \Re \int_{\R^d\times\R^d}  u_{\mu}(x)\Delta(\Delta-\kappa)\overline u_{\mu}(x)\overline u_{\iota}(y)\nabla_yu_{\iota}(y)\cdot\nabla_y\psi(x,y)\,dx dy.&
 \nonumber
\end{align}
Before to continue we need to notice the following fact. If we set 
$$
F(x)=\overline u_{\mu}(x)\Delta(\Delta-\kappa)u_{\mu}(x)+u_{\mu}(x)\Delta(\Delta-\kappa)\overline u_{\mu}(x),
$$
we have that 
  
    \begin{align}\label{eq:b2a}
4\Re \int_{\R^d\times\R^d}
  F(x)\overline u_{\iota}(y)\nabla_y u_{\iota}(y)\cdot\nabla_y\psi(x,y) \,dxdy&
 \\
  = \int_{\R^d\times\R^d}
 (F(x)+\overline{F}(x)) (\overline u_{\iota}(y)\nabla_y u_{\iota}(y)+ u_{\iota}(y)\nabla_y \overline u_{\iota}(y))\cdot\nabla_y\psi(x,y)\,dxdy &
 \nonumber\\
  -\int_{\R^d\times\R^d}
 (F(x)-\overline{F}(x)) (\overline u_{\iota}(y)\nabla_y u_{\iota}(y)-u_{\iota}(y)\nabla_y \overline u_{\iota}(y))\cdot\nabla_y\psi(x,y)\,dxdy &  
 \nonumber\\
  =
2   \int_{\R^d\times\R^d}
 (F(x)u_{\iota}(y)\nabla_y \overline u_{\iota}(y)+\overline{F}(x)\overline u_{\iota}(y)\nabla_y u_{\iota}(y))\cdot\nabla_y\psi(x,y)\,dxdy &
 \nonumber\\
  =
 4\Re  \int_{\R^d\times\R^d}
F(x)u_{\iota}(y)\nabla_y \overline u_{\iota}(y)\cdot\nabla_y\psi(x,y)\,dxdy. &
 \nonumber
\end{align}
Furthermore we obtain also the following
 \begin{align}\label{eq:b2a2}
2\Re \int_{\R^d\times\R^d}
  F(x) |u_{\iota}(y)|^2\Delta_y\psi(x,y) \,dxdy&
 \\
  = \int_{\R^d\times\R^d}
 (F(x)+\overline{F}(x)) (\overline u_{\iota}(y)\nabla_y u_{\iota}(y)+ u_{\iota}(y)\nabla_y \overline u_{\iota}(y))\cdot\nabla_y\psi(x,y)\,dxdy &
 \nonumber\\
  =
 \int_{\R^d\times\R^d}
 (F(x)\overline u_{\iota}(y)\nabla_y  u_{\iota}(y)+\overline{F}(x) u_{\iota}(y)\nabla_y \overline u_{\iota}(y))\cdot\nabla_y\psi(x,y)\,dxdy &
 \nonumber\\
 +   \int_{\R^d\times\R^d}
 (F(x)u_{\iota}(y)\nabla_y \overline u_{\iota}(y)+\overline{F}(x)\overline u_{\iota}(y)\nabla_y u_{\iota}(y))\cdot\nabla_y\psi(x,y)\,dxdy &
 \nonumber\\
  =
 2\Re \int_{\R^d\times\R^d}
  F(x)\overline u_{\iota}(y)\nabla_y u_{\iota}(y)\cdot\nabla_y\psi(x,y) \,dxdy&
  \nonumber\\
 +   2\Re\int_{\R^d\times\R^d}
 F(x)u_{\iota}(y)\nabla_y \overline u_{\iota}(y)\cdot\nabla_y\psi(x,y)\,dxdy. &
 \nonumber
\end{align}
Coupling \eqref{eq:b2a} and \eqref{eq:b2a2} we arrive at the equality
\begin{align}\label{eq:b2f}
  \Re \int_{\R^d\times\R^d}
  F(x) |u_{\iota}(y)|^2\Delta_y\psi(x,y) \,dxdy&
  \\
  =2\Re \int_{\R^d\times\R^d}
  F(x)\overline u_{\iota}(y)\nabla_y u_{\iota}(y)\cdot\nabla_y\psi(x,y) \,dxdy.&
  \nonumber
   \end{align}
  Then an application of  \eqref{eq:b2f} gives that the term on the r.h.s. of \eqref{eq:b2aInew} is equal to
 \begin{equation*}
 \begin{split}
 -2\Re \int_{\R^d\times\R^d}
 \overline u_{\mu}(x)\Delta(\Delta-\kappa)u_{\mu}(x) |u_{\iota}(y)|^2\Delta_y\psi(x,y)\,dxdy\\
 + 2\Re \int_{\R^d\times\R^d} u_{\mu}(x)\Delta(\Delta-\kappa)\overline u_{\mu}(x))|u_{\iota}(y)|^2\Delta_y\psi(x,y) \,dxdy=0.
 \end{split}
 \end{equation*}
 Thus one achieve $\mathcal{III}_1=0$ and for the same reasons, $\mathcal{III}_2=0$ also.
 Collecting all the previous steps we have that $\mathcal{II}+\mathcal{III}\leq 0$, which in turn, in combination with
\eqref{eq:1} and \eqref{eq:2}, implies \eqref{eq:intmor2} with $K(t,x,y)$ as in \eqref{eq.kernel}.
 \end{proof}
 
We have an equivalent version of the \eqref{eq:intmor2} which is interesting on its own because of the fact that the influence of the operator $-\kappa \Delta$ totally disappears. This is contained in the following
 
\begin{corollary}\label{cor:intmorbil}
 Let $(u_\mu)_{\mu=1}^N \in\mathcal C(\R, H^2(\R^d)^N)$, and $\psi=\psi(x,y)$ be as in Lemma \ref{lem:intmor}, then the following holds
\begin{align}
  \mathcal{ \dot M}(t) \label{eq:intmor2E}
   \leq
  2\sum_{\mu,\iota=1}^N\int_{\R^d\times\R^d}
  \Delta_x\psi(x,y)\widetilde K(t,x,y)\,dxdy&
 \\
-\frac {4p}{p+1}\sum_{\substack{\mu,\nu,\iota=1}}^N\gamma_{\mu\nu}\int_{\R^d\times\R^d}
  |u_\mu(x)|^{p+1}|u_\nu(x)|^{p+1}m_{u_\iota}(y)\Delta_x\psi(x,y)\,dxdy,&
  \nonumber
  \end{align}
 with $\gamma_{\mu\nu}=\beta_{\mu\nu}+N\lambda_{\mu\nu}$, for any $\mu,\nu=1,\dots N$ and where 
\begin{align}\label{eq.kernel2}
 \widetilde K(t,x,y)
 =-\Delta_xm_{u_\mu}(t,x)\Delta_y m_{u_\iota}(t,y)-2\nabla_xm_{\nabla u_\mu}(t,x) \nabla_ym_{ u_\iota}(t,y).
 \end{align}
\end{corollary}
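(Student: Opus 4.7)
The plan is to derive \eqref{eq:intmor2E} from \eqref{eq:intmor2} by a systematic integration by parts on the right-hand side, exploiting the radial symmetry $\psi(x,y)=\phi(|x-y|)$. The key point is that this structure gives $\nabla_{x}\psi=-\nabla_{y}\psi$ (hence $\Delta_{x}\psi=\Delta_{y}\psi$) and, more importantly, $\Delta_{x}^{2}\psi=\Delta_{x}\Delta_{y}\psi$. These identities let one pair each $x$-derivative of $\psi$ with a corresponding $y$-derivative, distributing the Laplacians in $\Delta_{x}^{2}\psi$ onto the two mass/energy densities $m_{u_{\mu}}(x)$ and $m_{u_{\iota}}(y)$ and leaving only one $\Delta_{x}\psi$ outside.

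Proceeding term by term in $K$: (i) for the correlation piece $\nabla_{x}m_{u_{\mu}}\cdot\nabla_{y}m_{u_{\iota}}$, integration by parts twice (once per variable) applied to $\Delta_{x}^{2}\psi=\Delta_{x}\Delta_{y}\psi$ yields $-\Delta_{x}m_{u_{\mu}}\Delta_{y}m_{u_{\iota}}$ against $\Delta_{x}\psi$, matching the first term of $\widetilde K$; (ii) for the quartic piece $2m_{\nabla u_{\mu}}m_{u_{\iota}}$, one integration by parts in $y$ combined with $\nabla_{y}(\Delta_{x}\psi)=-\nabla_{x}(\Delta_{x}\psi)$, followed by one in $x$, produces $-2\nabla_{x}m_{\nabla u_{\mu}}\cdot\nabla_{y}m_{u_{\iota}}$ against $\Delta_{x}\psi$, matching the second term of $\widetilde K$.

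The delicate point is (iii), the $\kappa$-piece: an analogous manipulation of $2\kappa\int\Delta_{x}^{2}\psi\,m_{u_{\mu}}m_{u_{\iota}}\,dx\,dy$ leaves a residual of the form $-2\kappa\int\Delta_{x}\psi\,\nabla_{x}m_{u_{\mu}}\cdot\nabla_{y}m_{u_{\iota}}\,dx\,dy$. To make this disappear, I would go back to the proof of Lemma \ref{lem:intmor} and, instead of discarding $\mathcal{II}_{1}\le 0$ at once, retain its explicit quadratic form $-4\kappa\int(H D_{x}^{2}\psi\bar H+GD_{x}^{2}\psi\bar G)\,dx\,dy$ (see \eqref{eq:b1b2}). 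Using the pointwise inequality $\Delta_{x}\psi\,|\nabla u|^{2}\ge \nabla u\cdot D_{x}^{2}\psi\cdot\nabla\bar u$, which holds in the complex sense for convex radial $\phi$ because $\mathrm{tr}(A)|v|^{2}-\Re(v^{*}Av)\ge 0$ whenever $A\succeq 0$, together with the density identity $\Delta m_{u}=2m_{\nabla u}+2\Re[\bar u\Delta u]$, the $\kappa$-residual can be absorbed into the non-positive combination of $\mathcal{II}_{1}$ pieces. The net effect is that the $-\kappa\Delta$-terms drop out entirely, and one is left with the kernel $\widetilde K$ exactly as stated.

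The main obstacle is bookkeeping: tracing carefully that the symmetries $D_{x}^{2}\psi=D_{y}^{2}\psi$, $D_{xy}^{2}\psi=-D_{x}^{2}\psi$ align the Hessian cross-terms $j_{u_{\mu}}D^{2}_{xy}\psi j_{u_{\iota}}$ in $\mathcal{II}_{1}$ with the momentum contributions produced by the IBP on the $\kappa$-piece of $K$, so that the full $\kappa$-dependence cancels with a non-positive remainder; the contribution $\mathcal{II}_{2}$ remains $\le 0$ by \eqref{eq.bbelow}--\eqref{eq.bbelow2}, and $\mathcal{III}=0$ as in Lemma \ref{lem:intmor}. Collecting everything yields the inequality \eqref{eq:intmor2E}; the nonlinear term is untouched by these manipulations since it already appears multiplied by $\Delta_{x}\psi$ in \eqref{eq:intmor2}.
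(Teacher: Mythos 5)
Your proposal follows the same route as the paper: integrate by parts term by term in $K$, using $\partial_{x_k}\psi=-\partial_{y_k}\psi$ to rewrite $\Delta_x^2\psi$ as a mixed $x$--$y$ operator, so that the correlation piece becomes $-\Delta_xm_{u_\mu}\Delta_ym_{u_\iota}$ against $\Delta_x\psi$ (this is \eqref{eq:equivalence2}), the piece $2m_{\nabla u_\mu}m_{u_\iota}$ becomes $-2\nabla_xm_{\nabla u_\mu}\cdot\nabla_ym_{u_\iota}$ against $\Delta_x\psi$ (this is \eqref{eq:intmorBE}), the nonlinear term is untouched, and the $\kappa$-residual is absorbed by retaining $\mathcal{II}_1$ in its explicit quadratic form rather than discarding it. The one place you diverge is the mechanism in step (iii): the paper writes $\Delta_x^2\psi=-D^2_{xy}D^2_x\psi$ so that the residual comes out in Hessian form, $-\kappa\int\nabla_xm_{u_\mu}D^2_x\psi\,\nabla_ym_{u_\iota}$, and then cancels \emph{algebraically} against the $G_{\mu\iota}D^2_x\phi\overline{G}_{\mu\iota}$ part of \eqref{eq:b1b2}, leaving only $-4\kappa\int H_{\mu\iota}D^2_x\phi\overline{H}_{\mu\iota}\le 0$ as in \eqref{eq:intmorAEN}; no inequality is needed at this stage. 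Your proposed pointwise bound $\Delta_x\psi\,|\nabla u|^2\ge\nabla u\,D^2_x\psi\,\nabla\bar u$ does not apply as stated, because the residual is a cross term coupling $\nabla_xm_{u_\mu}(x)$ with $\nabla_ym_{u_\iota}(y)$ at two different points rather than a quadratic form in a single vector, and it has no definite sign on its own; likewise the identity $\Delta m_u=2m_{\nabla u}+2\Re[\bar u\Delta u]$ plays no role. So keep the residual in the Hessian form and pair it with the $G$-quadratic of $\mathcal{II}_1$; with that adjustment your argument coincides with the paper's proof.
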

\begin{proof}
 In \eqref{eq:2iniz}, we replace \eqref{eq:equivalence} by the identity
 \begin{equation}\label{eq:equivalence2}
    \begin{split}
    &-\sum_{\mu,\iota=1}^N\int_{\R^d\times\R^d} m_{u_\mu}(t,x)m_{u_\iota}(t,y)\Delta_x^3\psi(x,y)\,dxdy\\
    =-&\sum_{\mu,\iota=1}^N\int_{\R^d\times\R^d} m_{u_\mu}(t,x)m_{u_\iota}(t,y)\Delta_y\Delta_x\Delta_x\psi(x,y)\,dxdy\\
    =-&\sum_{\mu,\iota=1}^N\int_{\R^d\times\R^d}\Delta_x m_{u_\mu}(t,x)\Delta_ym_{u_\iota}(t,y)\Delta_x\psi(x,y)\,dxdy.
    \end{split}
  \end{equation}
We can achieve in addition 
\begin{align} \label{eq:intmorBE}
  2\int_{\R^d\times\R^d}
  \Delta^2_x\psi(x,y)m_{\nabla u_\mu}(t,x)m_{ u_\iota}(t,y)\,dxdy&
  \\
  =- 2\int_{\R^d\times\R^d}
   \nabla_x\cdot\nabla_y\Delta_x\psi(x,y)m_{\nabla u_\mu}(t,x)m_{ u_\iota}(t,y)\,dxdy&
  \nonumber\\
=- 2\int_{\R^d\times\R^d}
  \Delta_x\psi(x,y)\nabla_xm_{\nabla u_\mu}(t,x)\cdot\nabla_ym_{ u_\iota}(t,y)\,dxdy.&
   \nonumber
  \end{align}
We get also
\begin{align} \label{eq:intmorAE}
  \kappa \int_{\R^d\times\R^d}
  \Delta^2_x\psi(x,y) m_{u_\mu}(t,x) m_{u_\iota}(t,y)\,dxdy&
  \\
  =
- \kappa\int_{\R^d\times\R^d}
 D^2_{xy} D^2_x\psi(x,y) m_{u_\mu}(t,x) m_{u_\iota}(t,y)\,dxdy&
\nonumber\\
   =
  - \kappa\int_{\R^d\times\R^d}
 \nabla_xm_{u_\mu}(t,x)D^2_x\psi(x,y)\nabla_y m_{u_\iota}(t,y)\,dxdy&.
  \nonumber
  \end{align}
  Furthermore, one notices that the following relation is fulfilled 
  \begin{equation}\label{eq:intmorAEN}
\begin{split}
 - 2\kappa\int_{\R^d\times\R^d}
 \nabla_xm_{u_\mu}(t,x)D^2_x\psi(x,y)\nabla_y m_{u_\iota}(t,y)\,dxdy+\mathcal{II}_1 \\
  = -2 \kappa\int_{\R^d\times\R^d}
 \nabla_xm_{u_\mu}(t,x)D^2_x\psi(x,y)\nabla_y m_{u_\iota}(t,y)\,dxdy\\
 -4\kappa\int_{\R^d\times\R^d}
 \left(H_{\mu\iota} D^2_{x}\phi(|x-y|)\overline{H}_{\mu\iota}+G_{\mu\iota} D^2_{x}\phi(|x-y|)\overline{G}_{\mu\iota}\right)\,dxdy\\
= -4\kappa\int_{\R^d\times\R^d}
 H_{\mu\iota} D^2_{x}\phi(|x-y|)\overline{H}_{\mu\iota}\,dxdy\leq 0,\\
 \end{split}
\end{equation}
with $\mathcal{II}_1$ as in \eqref{eqb1aI2}. Then the identities \eqref{eq:equivalence2}, \eqref{eq:intmorBE} and \eqref{eq:intmorAE} in conjunction with \eqref{eq:intmorAEN} enable us to rewrite \eqref{eq:intmor2} as \eqref{eq:intmor2E} with $\widetilde K(t,x,y)$ as in \eqref{eq.kernel2}.

\end{proof}

We need to recall now that Lemma 2.1. in \cite{PaX} and Theorem 2.1. in \cite{GS} (see also \cite{Ca} for the general theory) in connection with the defocusing feature of the system imply a well-known result concerning global well-posedness 
for \eqref{eq:nls}. That is
\begin{proposition}\label{ConsLaw}
  Let $1\leq d \leq 8$ and $p>0$ be such that \eqref{eq:base} holds.
  Then for all $(u_{\mu,0})_{\mu=1}^N \in \Hin^2_x$ there exists a unique 
$(u_\mu)_{\mu=1}^N \in C(\R,\Hin^2_x) $ 
  solution to \eqref{eq:nls}, moreover 
\begin{align}
  &M(u_\mu)(t)=\norma{u_\mu(0)}_{L^2_x} \quad \text{ for all }\mu=1,\dots,N, \label{eq:massconservation}\\
  &E(u_1(t),\dots,u_N(t))=E(u_1(0),\dots,u_N(0))\label{eq:energyconservation},
\end{align}
with $E(u_1(t),\dots,u_N(t))$ as in \eqref{eq:energy}.
\end{proposition}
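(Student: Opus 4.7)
The plan is to combine three ingredients: a local Cauchy theory in $\Hin^2_x$ obtained by a Strichartz-based contraction, the formal conservation identities \eqref{eq:massconservation}--\eqref{eq:energyconservation}, and a blow-up alternative that converts local-in-time to global-in-time thanks to the defocusing sign.

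For local well-posedness I would rewrite \eqref{eq:nls} componentwise in Duhamel form
\begin{equation*}
u_\mu(t)=e^{it(\Delta^2-\kappa\Delta)}u_{\mu,0}-i\int_0^t e^{i(t-s)(\Delta^2-\kappa\Delta)}\sum_{\nu=1}^N G(u_\mu(s),u_\nu(s))\,ds,
\end{equation*}
and apply the Strichartz estimates for the biharmonic propagator from \cite{BKS00} to set up a contraction on a ball of $C([0,T];\Hin^2_x)\cap L^q([0,T];\Win^{2,r}_x)$ for a suitable admissible pair $(q,r)$ and small $T>0$. The upper bound $p<p^*(d)$ makes the nonlinearity $\Hin^2_x$-subcritical, while the lower bound $p\geq 1$ guarantees that the map $(v,w)\mapsto |v|^{p+1}|w|^{p-1}w$ is locally Lipschitz on $\Hin^2_x$ -- precisely the restriction highlighted in Remark \ref{beta0}. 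The fixed point produces a unique solution $(u_\mu)_{\mu=1}^N\in C([0,T],\Hin^2_x)$ together with the usual blow-up alternative, lifting Lemma~2.1 of \cite{PaX} and Theorem~2.1 of \cite{GS} componentwise to the system.

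Conservation of mass and energy is first proved on smooth data and then transferred to $\Hin^2_x$ by density, using the continuous dependence of the flow on the initial datum. For \eqref{eq:massconservation}, I would multiply the $\mu$-th equation by $\bar u_\mu$, integrate and take the imaginary part: after integration by parts, the linear terms contribute real quantities and the nonlinear terms produce the real densities $\beta_{\mu\nu}|u_\nu|^{p+1}|u_\mu|^{p+1}$ and $\lambda_{\mu\nu}|u_\nu|^{p+1}|u_\mu|^{p+1}$, so that $\tfrac{d}{dt}M(u_\mu)=0$. For \eqref{eq:energyconservation}, I would test with $\partial_t \bar u_\mu$, take the real part and sum over $\mu$: through the equation, the kinetic contribution $\Re\int(\Delta^2 u_\mu-\kappa\Delta u_\mu)\overline{\partial_t u_\mu}$ is exchanged with $-\Re\sum_\nu\int G(u_\mu,u_\nu)\overline{\partial_t u_\mu}$, which must cancel the time-derivative of the potential part of $E$. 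The main algebraic point -- and where I expect to spend the most care -- is verifying that, after symmetrising in $\mu\leftrightarrow\nu$, the contributions generated by $\beta_{\mu\nu}$ and $\lambda_{\mu\nu}$ recombine into exactly the weight $\beta_{\mu\nu}+N\lambda_{\mu\nu}$ appearing in \eqref{eq:energy}, the factor $N$ coming from the inner sum in \eqref{eq.nonlst}.

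Finally, to upgrade the local solution to a global one I would invoke the defocusing hypothesis $\beta_{\mu\nu},\lambda_{\mu\nu}\geq 0$, which makes the potential part of $E$ non-negative, so that
\begin{equation*}
\sum_{\mu=1}^N\|\Delta u_\mu(t)\|_{L^2_x}^2\,\leq\,E(u_1(0),\dots,u_N(0))\qquad\text{for every }t\in(T_{\min},T_{\max}).
\end{equation*}
Combining this with \eqref{eq:massconservation} yields an a priori bound on $\sum_\mu\|u_\mu(t)\|_{H^2_x}$ independent of $t$, which contradicts the blow-up alternative unless $(T_{\min},T_{\max})=\R$, completing the proof.
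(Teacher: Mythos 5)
Your proposal is correct and follows essentially the same route as the paper: the paper in fact offers no proof of Proposition \ref{ConsLaw}, simply invoking Lemma 2.1 of \cite{PaX}, Theorem 2.1 of \cite{GS} and the general theory of \cite{Ca}, and those sources establish the result by precisely the scheme you outline (Strichartz-based contraction in $\Hin^2_x$, formal mass/energy identities transferred by density, and the defocusing a priori bound combined with the blow-up alternative). Your explicit flagging of the $p\geq 1$ Lipschitz restriction and of the recombination into the weight $\beta_{\mu\nu}+N\lambda_{\mu\nu}$ covers the only points the citation leaves implicit.
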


A direct consequence of Proposition \ref{ConsLaw}, Lemma \ref{lem:intmor} (and of Corollary \ref{cor:intmorbil}) is the following result concerning both the linear and nonlinear Morawetz estimates. Specifically we have

\begin{proposition}\label{dim3}
  Let $d\geq3$, $N\geq1$, $p>0$ be such that \eqref{eq:base} holds
  and let $(u_\mu)^N_{\mu=1}\in\mathcal C(\R,H^2(\R^d)^N)$ be a global solution to \eqref{eq:nls}. Then, if we indicate by $\gamma_{\mu\nu}=\beta_{\mu\nu}+N\lambda_{\mu\nu}$, for any $\mu,\nu=1,\dots N$, one has:

\begin{itemize}
\item for $d=3$:
\begin{align}\label{eq:stima}
  \int_{\R}\int_{\R}|\sum_{\mu=1}^Nm_{u_\mu}(t,x)|^{2}\,dt\,dx
+  \int_{\R}\int_{\R}|\sum_{\mu=1}^N\nabla_xm_{u_\mu}(t,x)|^{2}\,dt\,dx &
\\
+\sum_{\mu=1}^N \gamma_{\mu\mu}\int_{\R}\int_{\R}|u_\mu(t,x)|^{2p+4}\,dt\,dx\leq C\sum_{\mu=1}^N\|u_{\mu,0}\|^4_{H^2_x};&
\nonumber
\end{align}
 \end{itemize}

\begin{itemize}
\item for $d\geq 4$:
\begin{align}\label{eq:stima0}
  \sum_{\mu=1}^N\int_{\R}\int_{\R^d\times\R^d}\frac{\nabla_x |u_\mu(t,x)|^{2}\nabla_y|u_\mu(t,y)|^2}{|x-y|^3}
  \,dx\,dy\,dt&\\
 + \sum_{\mu=1}^N \gamma_{\mu\mu}\int_{\R}\int_{\R^d\times\R^d}\frac{ |u_\mu(t,x)|^{2p+2}|u_\mu(t,y)|^2}{|x-y|}
  \,dx\,dy\,dt
 \leq C\sum_{\mu=1}^N\|u_{\mu,0}\|^4_{H^2_x}. &
 \nonumber
 \end{align}
\end{itemize}
 
  \end{proposition}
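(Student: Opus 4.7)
The plan is to apply the interaction Morawetz inequality \eqref{eq:intmor2} (and, where convenient, its equivalent form \eqref{eq:intmor2E}) with the convex radial weight $\psi(x,y)=|x-y|$, integrate over $t\in\R$, and combine with a uniform-in-time bound on the action $\mathcal{M}(t)$. First I would verify that $\phi(r)=r$ meets the hypotheses \eqref{eq.bbelow2}--\eqref{eq.bbelow} of Lemma \ref{lem:intmor}: the Hessian $D^2\phi(|x|)=(I-\hat x\otimes\hat x)/|x|$ is positive semidefinite, projecting onto the tangent sphere, which yields \eqref{eq.bbelow2} with $\rho_1(r)=1/r$, while $D^2\Delta\phi$ is negative definite in the radial direction and gives \eqref{eq.bbelow}. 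Using the formula $\Delta|x|^s=s(s+d-2)|x|^{s-2}$ one reads off $\Delta_x\psi=(d-1)/|x-y|$ and $\Delta_x^2\psi=-(d-1)(d-3)/|x-y|^3$ for $d\geq 4$, whereas in $d=3$ the distributional identity $\Delta_x^2|x-y|=-8\pi\delta_{x=y}$ holds.

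The a priori bound on the action is obtained via Cauchy--Schwarz and $|\nabla_x|x-y||\leq 1$:
\begin{equation*}
\sup_{t\in\R}|\mathcal{M}(t)|\leq 2\sum_{\mu,\iota=1}^N\|u_\mu(t)\|_{L^2}\|\nabla u_\mu(t)\|_{L^2}\|u_\iota(t)\|_{L^2}^2\leq C\sum_{\mu=1}^N\|u_{\mu,0}\|_{H^2}^4,
\end{equation*}
using mass \eqref{eq:massconservation} and the defocusing (hence coercive) energy \eqref{eq:energyconservation} from Proposition \ref{ConsLaw}.

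For $d\geq 4$, substituting into \eqref{eq:intmor2} every right-hand-side contribution is nonpositive: $\gamma_{\mu\nu}\geq 0$ together with $\Delta_x\psi>0$ handles the nonlinear part, while $\Delta_x^2\psi<0$ together with the nonnegativity of $\sum_{\mu,\iota}K(t,x,y)$ (a scalar product of vector sums with themselves plus the positive piece $(\sum_\mu m_{\nabla u_\mu})(\sum_\iota m_{u_\iota})$) handles the linear part. Integrating \eqref{eq:intmor2} in time, bounding the left-hand side by $2\sup|\mathcal{M}|$ and transferring the right-hand side to the left produces, after isolating the diagonal $\mu=\nu=\iota$ terms inside the $\sum_{\mu,\iota}$ (using the Fourier positivity of the Riesz kernel $|x-y|^{-3}$ on each component), the estimate \eqref{eq:stima0}. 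For $d=3$, the distributional formula $\Delta_x^2\psi=-8\pi\delta_{x=y}$ collapses the first right-hand-side integral of \eqref{eq:intmor2} to $-16\pi\int\sum_{\mu,\iota}K(t,x,x)\,dx$, producing after summation the pointwise nonnegative quantities $\bigl|\sum_\mu\nabla m_{u_\mu}\bigr|^2$, $\kappa\bigl|\sum_\mu m_{u_\mu}\bigr|^2$ and $2(\sum_\mu m_{\nabla u_\mu})(\sum_\iota m_{u_\iota})$; time integration and transfer to the left-hand side yield the first two spacetime integrals of \eqref{eq:stima}.

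The delicate step is the local $L^{2p+4}_{t,x}$ bound: with $\psi=|x-y|$ the nonlinear part of \eqref{eq:intmor2} only delivers the nonlocal quantity $\int\!\!\int\!\!\int|u_\mu(x)|^{2p+2}|u_\iota(y)|^2/|x-y|\,dxdydt$, which is not of the required local form. I expect to recover the local bound either by replacing $\psi$ with a regularized localizing weight (morally $\psi_\varepsilon\to -1/(4\pi|x-y|)$ in $\R^3$, so that $\Delta_x\psi_\varepsilon\to\delta_{x=y}$), consistent with the convexity hypotheses of Lemma \ref{lem:intmor} only after suitable truncation, and then passing to the limit; or, equivalently, by combining the $|x-y|^{-1}$ nonlocal bound with the Newton-potential identity $-\Delta_x(1/|x-y|)=4\pi\delta_{x=y}$ in $\R^3$ and integration by parts against the quantities $\sum_\mu\nabla m_{u_\mu}$ already controlled above. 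This localization step in $d=3$ is the main technical obstacle of the proof.
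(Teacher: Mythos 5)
Your proposal follows the paper's proof essentially step for step: the weight $\psi(x,y)=|x-y|$, the computation of $\Delta_x\psi$ and $\Delta^2_x\psi$ (your constant $-8\pi\delta_{x=y}$ in $d=3$ is in fact the correct one), the uniform bound $\sup_t|\mathcal M(t)|\lesssim\sum_\mu\|u_{\mu,0}\|^4_{H^2}$ from the conservation laws, the sign analysis of the right-hand side of \eqref{eq:intmor2}, and the integration in time. One caveat in the $d\geq4$ case concerns your parenthetical ``isolating the diagonal terms using Fourier positivity on each component'': pointwise positivity of $|x-y|^{-1}$ does let you discard off-diagonal terms in the nonlinear part, but for $\int\!\!\int\nabla_xm_{u_\mu}\cdot\nabla_ym_{u_\iota}\,|x-y|^{-3}$ the cross terms $\mu\neq\iota$ are neither pointwise nonnegative nor obviously nonnegative on the Fourier side in every dimension; the paper sidesteps this by keeping the full sum $\zeta=\sum_\mu m_{u_\mu}$ as in \eqref{eq:intmor3fir}--\eqref{eq.int1}, and you should do the same.

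On the ``delicate step'' you flag in $d=3$: your diagnosis is right, and neither of your proposed repairs would work --- a regularized Newton potential is concave, which reverses the sign of the Hessian and bilaplacian contributions you need to be favorable, and undoing the $|x-y|^{-1}$ weight via $-\Delta(4\pi|x-y|)^{-1}=\delta_{x=y}$ amounts to inverting $(-\Delta)^{-1}$, which loses rather than gains. But be aware that the paper's own proof does not carry out this localization either: with $\psi=|x-y|$ the nonlinear contribution in $d=3$ is exactly the nonlocal quantity $\int\!\!\int\!\!\int|u_\mu(x)|^{2p+2}|u_\iota(y)|^2|x-y|^{-1}\,dx\,dy\,dt$, just as for $d\geq4$, while the local terms genuinely produced by the $\delta$-function in $\Delta^2_x\psi$ are only the first two (quadratic-in-mass) integrals of \eqref{eq:stima}. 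The nonlocal form is moreover all that is needed downstream: in the contradiction argument of Proposition \ref{decay} one restricts to products of fixed cubes, on which $|x-y|^{-1}\gtrsim1$, exactly as in \eqref{eq:stima3} for $d\geq4$. So the correct resolution of your obstacle is not to localize but to state and use the $d=3$ nonlinear estimate in the same nonlocal form as in \eqref{eq:stima0}; with that adjustment your argument is complete and coincides with the paper's.
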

\begin{proof}

 Let us choose  $\psi(x,y)=|x-y|$. After an easy calculation one achieve that
 
 \begin{equation}\label{eq.deltaH0}
\Delta_x|x-y|=
\frac{(n-1)}{|x-y|},
\end{equation}
for $d>1$ and
\begin{equation}\label{eq.deltaH}
\Delta^2_x|x-y|=
\begin{cases}
-\frac{(n-1)(n-3)}{|x-y|^3}  \ \ \ \  \  \  \text{if} \ \ \ d\geq4,\\
\\
-4\pi\delta_{x=y} \ \ \  \, \, \, \, \, \,  \, \, \, \, \, \, \, \, \text{if} \ \ \ d=3.
\end{cases}
\end{equation}
Additionally, it satisfies for $d\geq2$ and with $v=x-y$,
\begin{equation*}
D^2_x u(x)D^2_x|x-y|D^2_x \bar u(x)\geq 
\frac{(n-1)}{|x-y|^3}|\nabla_v^{\bot}u(x)|^2,
\end{equation*}
that is, the bound \eqref{eq.bbelow2} (we remand to \cite{LS}) and
\begin{equation*}
\nabla_x u(x)D_x^2\Delta^2_x|x-y|\nabla_x \bar u(x)
=-\frac{(n-1)}{|x-y|^3}(|\nabla_v^{\bot}u(x)|^2- 2|\nabla u(x)-\nabla_v^{\bot}u(x)|^2),
\end{equation*}
that is the bound \eqref{eq.bbelow} (here we remand to \cite{MWZ}).
From the inequality \eqref{eq:intmor2} with $\Delta^2\psi(x,y)\leq 0$ as in \eqref{eq.deltaH} and dropping one nonpositive term, we obtain
\begin{align}\label{eq:intmor3}
  \mathcal{ \dot M}(t) 
   \leq
  2 \sum_{\mu,\iota=1}^N \int_{\R^d\times\R^d}\nabla_xm_{u_\mu}(t,x)\cdot\nabla_y m_{u_\iota}(t,y)\Delta^2_x\psi(x,y)\,dxdy&
  \\
  +
  2 \kappa\sum_{\mu,\iota=1}^N \int_{\R^d\times\R^d} m_{u_\mu}(t,x) m_{u_\iota}(t,y)\Delta^2_x\psi(x,y)\,dxdy&
 \nonumber \\
  - \frac{4p}{p+1}\sum_{\mu,\iota=1}^N\gamma_{\mu\mu}\int_{\R^d\times\R^d}
  |u_\mu(t,x)|^{2p+2}m_{u_\iota}(t,y)\Delta_x\psi(x,y)\,dxdy&
  \nonumber
  \\
  -\frac{4p}{p+1}\sum_{\substack{\mu,\nu,\iota=1\\
  \mu\neq\nu}}^N\gamma_{\mu\nu}\int_{\R^d\times\R^d}
  |u_\mu(t,x)|^{p+1}|u_\nu(t,x)|^{p+1}m_{u_\iota}(t,y)\Delta_x\psi(x,y)\,dxdy,&
  \nonumber
\end{align}
then the first of the terms in the r.h.s. of the above equality can be written as
in Proposition \ref{dim3}, namely
\begin{align}\label{eq:intmor3fir}
 2 \sum_{\mu,\iota=1}^N \int_{\R^d\times\R^d}\nabla_xm_{u_\mu}(t,x)\nabla_y m_{u_\iota}(t,y)\Delta^2_x\psi(x,y)\,dxdy&
  \\
  =2\int_{\R^d\times\R^d}\sum_{\mu=1}^N\nabla_xm_{u_\mu}(t,x)\sum_{\iota=1}^N\nabla_y m_{u_\iota}(t,y)\Delta^2_x\psi(x,y)\,dxdy&
\nonumber  \\
=-2\int_{\R^d\times\R^d}\nabla_x\zeta(t,x)\nabla_y \zeta(t,y)\Delta^2_x\psi(x,y),
    \nonumber
\end{align}
with 
\begin{equation}\label{eq.zeta}
\zeta(t, \cdot)=\sum_{\mu=1}^N m_{u_\mu}(t,\cdot).
\end{equation}
By a direct inspection, one can check by using the Fourier transform and Plancherel's identity that
  \begin{align}\label{eq.int1}
\int_{\R^d\times\R^d}\nabla_x\zeta(t, x)\cdot\nabla_y \zeta(t, y)\Delta^2_x\psi(x,y)\,dxdy&\\
=-(\nabla \zeta(t, \cdot),(-\Delta)^{\frac{3-d}2} \nabla \zeta(t, \cdot))\leq 0,
\nonumber
\end{align}
where $(\cdot,\cdot)$ is the inner product in $L^2$. This means that the terms in the r.h.s. of the inequality \eqref{eq:intmor3} are all nonpositive. Integrating \eqref{eq:intmor3}  w.r.t. time variable over the interval $[T_1, T_2]$, for any $T_1, T_2\in \R$, one obtains by \eqref{eq:intmor1}
\begin{align}\label{eq:intmor3t}
 \sup_{t\in[T_1, T_2]} |\mathcal{ M}(t)| \\
   \geq
  -2 \sum_{\mu,\iota=1}^N \int_{T_1}^{T_2} \int_{\R^d\times\R^d}\nabla_xm_{u_\mu}(t,x)\cdot\nabla_y m_{u_\iota}(t,y)\Delta^2_x\psi(x,y)\,dxdy&
  \nonumber\\
  -2 \kappa\sum_{\mu,\iota=1}^N\int_{T_1}^{T_2} \int_{\R^d\times\R^d} m_{u_\mu}(t,x) m_{u_\iota}(t,y)\Delta^2_x\psi(x,y)\,dxdy&
 \nonumber \\
  + \frac{4p}{p+1}\left(\sum_{\mu,\iota=1}^N\gamma_{\mu\nu}\int_{T_1}^{T_2}\int_{\R^d\times\R^d}
  |u_\mu(t,x)|^{2p+2}m_{u_\iota}(t,y)\Delta_x\psi(x,y)\,dxdy\right.&
  \nonumber
  \\
 +\left. \sum_{\substack{\mu,\nu,\iota=1\\
  \mu\neq\nu}}^N\gamma_{\mu\nu}\int_{T_1}^{T_2}\int_{\R^d\times\R^d}
  |u_\mu(t,x)|^{p+1}|u_\nu(t,x)|^{p+1}m_{u_\iota}(t,y)\Delta_x\psi(x,y)\,dxdy\right),&
  \nonumber
\end{align}
where all the term on the r.h.s. of the above bound are nonnegative. We have also the following
important eastimate
\begin{align}
&
 2 \sup_{t\in[T_1, T_2]}\sum_{\mu,\iota=1}^N \left|\int_{\R^3}\int_{\R^3}j_{u_\mu}(t,x)\cdot\nabla_x\psi(x,y)m_{u_\iota}(t,y)\,dxdy\right|
\label{eq:a}
\\
&
\leq C_1\sup_{t\in[T_1, T_2]}\sum_{\mu=1}^N\|u_\mu(t)\|^4_{H^2_x}
\leq C_2\sum_{\mu=1}^N\|u_{\mu,0}\|^4_{H^2_x}<\infty,
\nonumber
\end{align}
for some $C_1,C_2>0$ and any $T_1,T_2\in \R$, because of the $H^2_x$-norm is conserved.  Thus \eqref{eq:stima} and \eqref{eq:stima0} follow by \eqref{eq:intmor3t}, and \eqref{eq:a}, letting $T_2\to\infty,  T_1\to-\infty$.
\end{proof}

It is interesting to see, as an alternate take, how we can arrive at the same results of the Proposition \ref{dim3} by using the
inequality \eqref{eq:intmor2E}. This is contained in the following

\begin{remark}\label{dim34}
 We can carry out \eqref{eq:stima} and \eqref{eq:stima0} by a direct use of the inequality \eqref{eq:intmor2E}. Pick up once again $\psi(x,y)=|x-y|$, we can manage the sum of integrals
\begin{equation}\label{eq:intmor3fint}
\begin{split}
-2\sum_{\mu,\iota=1}^N\int_{\R^d\times\R^d}
  \Delta_x\psi(x,y) \Delta_xm_{u_\mu}(t,x)\Delta_y m_{u_\iota}(t,y)\,dxdy\\
  -4\sum_{\mu,\iota=1}^N\int_{\R^d\times\R^d}
  \Delta_x\psi(x,y) \nabla_xm_{\nabla u_\mu}(t,x) \nabla_ym_{ u_\iota}(t,y)\,dxdy,
  \end{split}
\end{equation}
in two steps. The first of the terms in \eqref{eq:intmor3fint} can be reshaped in a similar way as in the proof
od Proposition \ref{dim3}, that is
\begin{align}\label{eq:intmor3fir2}
 - 2 \sum_{\mu,\iota=1}^N \int_{\R^d\times\R^d}\Delta_xm_{u_\mu}(t,x)\Delta_y m_{u_\iota}(t,y)\Delta_x\psi(x,y)\,dxdy&
  \\
  =-2\int_{\R^d\times\R^d}\sum_{\mu=1}^N\Delta_xm_{u_\mu}(t,x)\sum_{\iota=1}^N\Delta_y m_{u_\iota}(t,y)\Delta_x\psi(x,y)\,dxdy&
\nonumber  \\
=-2\int_{\R^d\times\R^d}\Delta_x\zeta(t,x)\Delta_y \zeta(t,y)\Delta_x\psi(x,y),
    \nonumber
\end{align}
with $\zeta(t, \cdot)$ as in \eqref{eq.zeta}. Proceeding as formerly, by a further use of the Fourier transform and Plancherel's identity, we check that
  \begin{align}\label{eq.int1a}
\int_{\R^d\times\R^d}\Delta_x\zeta(t, x)\cdot\nabla_y \zeta(x, y)\Delta_x\psi(x,y)\,dxdy&\\
=(\Delta\zeta(t,\cdot),(-\Delta)^{\frac{1-d}2} \Delta \zeta(t, \cdot))\geq 0.
\nonumber
\end{align}
The second term in \eqref{eq:intmor3fint} is easy to control. By bearing in mind that 
 \begin{align}\label{eq.L2prod}
\int_{\R^d\times\R^d}\nabla_xm_{\nabla u_\mu}(t,x)\cdot\nabla_y m_{u_\iota}(t,y)\Delta_x\psi(x,y)\,dxdy&&\\
=( \nabla m_{\nabla u_{\mu}}(t,\cdot),(-\Delta)^{\frac{1-d}2} \nabla m_{u_\iota}(t, \cdot))\geq 0,
\nonumber
\end{align}
for any $\mu, \iota= 1,\dots, N$ and $d\geq3$, then we have that
\begin{align}\label{eq:intmor3sec}
    -4 \sum_{\mu,\iota=1}^N \int_{\R^d\times\R^d}\nabla_xm_{\nabla u_\mu}(t,x)\cdot\nabla_y m_{u_\iota}(t,y)\Delta_x\psi(x,y)\,dxdy\leq 0.
    \end{align}
A combined use of \eqref{eq.int1a}, \eqref{eq.L2prod} and the argument in the proof of Proposition \ref{dim3}, imply \eqref{eq:stima} and \eqref{eq:stima0}.
\end{remark}
By \eqref{eq:intmor3t} in Proposition \ref{dim3} and the above Remark \ref{dim34} one arrives at the following corollary, where some new linear correlation-type estimates associated to the solution to \eqref{eq:nls} are obtained for $d=3,4$. In particular, for $d=4$, we get  (both for single and N-system equations) a similar estimates given in \cite{CT}, which is a diagonal, nonlinear analogue for NL4S of the bilinear refinement of Strichartz appeared in \cite{B98}. We have then

\begin{corollary}\label{correl}
Let $d\geq3$, $N\geq 1$, $p>0$ be such that \eqref{eq:base} holds
  and let $(u_\mu)^N_{\mu=1}\in\mathcal C(\R,H^2(\R^d)^N)$ be a global solution to \eqref{eq:nls}. Then one has,
  for $d\geq5$,
    \begin{align*}
  \sum^N_{\mu=1}\|(-\Delta)^{\frac{5-d}{4}}|u_\mu(t, x)|^2\|^2_{L^2((T_1, T_2);L_x^2)}\lesssim  \sup_{t\in[T_1, T_2]} |\mathcal{ M}(t)|.
  \end{align*}
In particular the following estimates are valid:
\begin{itemize}
\item for $d=3$, 
\begin{align*}
  \sum^N_{\mu=1}\|(-\Delta)^{\frac{1}{2}}|u_\mu(t, x)|^2\|^2_{L^2((T_1, T_2);L_x^2)}\lesssim  \sup_{t\in[T_1, T_2]} |\mathcal{ M}(t)|;
  \end{align*}
  \item  for $d=4$
  \begin{align*}
  \sum^N_{\mu=1}\|(-\Delta)^{\frac{1}{4}}|u_\mu(t, x)|^2\|^2_{L^2((T_1, T_2);L_x^2)}\lesssim  \sup_{t\in[T_1, T_2]} |\mathcal{ M}(t)|.
  \end{align*}
   \end{itemize}
\end{corollary}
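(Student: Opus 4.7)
My plan is to read off the corollary from the Morawetz bound proved in Proposition~\ref{dim3}, once the quadratic interaction Morawetz kernel is identified in Fourier variables with an $\dot H^{(5-d)/2}$-type pairing of the mass densities. The central calculation is: with $\psi(x,y)=|x-y|$ and any real Schwartz $f,g$,
\[
-\int_{\R^d\times\R^d}\Delta_x^2|x-y|\,\nabla_x f(x)\cdot\nabla_y g(y)\,dx\,dy
=C_d\,\big\langle(-\Delta)^{(5-d)/4}f,(-\Delta)^{(5-d)/4}g\big\rangle_{L^2_x},
\qquad C_d>0,
\]
because $\widehat{\Delta_x^2|x-y|}(\xi)$ is a negative constant times $|\xi|^{3-d}$ and the two gradients contribute an extra $|\xi|^2$ in Plancherel. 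Applied with $f=g=\zeta:=\sum_\mu m_{u_\mu}$ this is the identity already used in \eqref{eq.int1}, and combined with \eqref{eq:intmor3t} it gives $\int_{T_1}^{T_2}\|(-\Delta)^{(5-d)/4}\zeta(t)\|_{L^2_x}^{2}\,dt\lesssim\sup_{t\in[T_1,T_2]}|\mathcal M(t)|$.

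Passing from $\zeta$ to the sum of individual squared norms is the only extra step, and for $d\ge 5$ it is painless: I would expand
\[
\|(-\Delta)^{(5-d)/4}\zeta\|_{L^2}^{2}=\sum_{\mu,\iota=1}^{N}\big\langle m_{u_\mu},(-\Delta)^{(5-d)/2}m_{u_\iota}\big\rangle_{L^2}
\]
and use that the exponent $(5-d)/2\le 0$ makes $(-\Delta)^{(5-d)/2}$ either the identity ($d=5$) or convolution with the strictly positive Riesz kernel $c_d|x|^{-5}$ ($d\ge 6$). Since the densities $m_{u_\mu}$ are nonnegative, every cross-pairing is nonnegative, hence $\sum_\mu \|(-\Delta)^{(5-d)/4}m_{u_\mu}\|^{2}\le \|(-\Delta)^{(5-d)/4}\zeta\|^{2}$, and time integration closes the case $d\ge 5$.

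For $d=3,4$ the exponent $(5-d)/2$ is positive and the kernel of $(-\Delta)^{(5-d)/2}$ does not have a definite sign on nonnegative inputs, so I cannot extract the individual species from $\zeta$ directly. Instead I would rerun the interaction Morawetz identity species-by-species, using the reduced action $\tilde{\mathcal M}_\mu(t):=2\int j_{u_\mu}(x)\cdot\nabla_x\psi(x,y)\,m_{u_\mu}(y)\,dx\,dy$: inspecting the proof of Lemma~\ref{lem:intmor} one sees that the sign analysis is already termwise in $(\mu,\iota)$---the $H_{\mu\iota},G_{\mu\iota}$ decomposition gives $\mathcal{II}_1\le 0$ per pair, assumption \eqref{eq.bbelow} gives $\mathcal{II}_2\le 0$ per pair, the identities $\mathcal{III}_1=\mathcal{III}_2=0$ hold per pair, and the defocusing nonlinearity keeps the right sign. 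Combined with the Fourier identity above, applied with $f=g=m_{u_\mu}$ (and, in $d=3$, the distributional identity $\Delta_x^2|x-y|=-4\pi\,\delta_{x=y}$), this yields $\int_{T_1}^{T_2}\|(-\Delta)^{(5-d)/4}m_{u_\mu}(t)\|_{L^2_x}^{2}\,dt\lesssim\sup_{[T_1,T_2]}|\tilde{\mathcal M}_\mu(t)|$; summing in $\mu$ and absorbing $\sum_\mu\sup|\tilde{\mathcal M}_\mu|$ into $\sup|\mathcal M(t)|$ via the Cauchy--Schwarz bound used in \eqref{eq:a} finishes the argument.

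The delicate step is the $d=3,4$ case: one needs to verify carefully that the nonpositivity of $\mathcal{II}_1,\mathcal{II}_2,\mathcal{III}_j$ and of the nonlinear Morawetz contribution in Lemma~\ref{lem:intmor} is genuinely pair-by-pair in $(\mu,\iota)$, so that the reduction to the single-species action is legitimate; the Fourier identification of the kernel and the time integration are then routine.
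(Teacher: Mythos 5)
Your Fourier identification of the kernel and the resulting bound $\int_{T_1}^{T_2}\|(-\Delta)^{\frac{5-d}{4}}\zeta(t)\|_{L^2_x}^{2}\,dt\lesssim\sup_{[T_1,T_2]}|\mathcal M(t)|$, with $\zeta=\sum_\mu m_{u_\mu}$, is exactly the paper's route: it is \eqref{eq.int1} combined with \eqref{eq:intmor3t} (Remark \ref{dim34} records the same quantity in the form $(\Delta\zeta,(-\Delta)^{\frac{1-d}{2}}\Delta\zeta)$). For $d\geq 5$ your passage from $\zeta$ to the sum of individual norms, via the positivity of the Riesz kernel of $(-\Delta)^{\frac{5-d}{2}}$ acting on the nonnegative densities $m_{u_\mu}$, is correct and in fact supplies a step the paper leaves implicit. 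For $d=3,4$ you depart from the paper, and rightly so: the paper passes from the $\zeta$-estimate to the corollary in one line, but there the cross pairings $\langle m_{u_\mu},(-\Delta)^{\frac{5-d}{2}}m_{u_\iota}\rangle$ (for instance $\int\nabla m_{u_\mu}\cdot\nabla m_{u_\iota}\,dx$ when $d=3$) have no definite sign for nonnegative inputs, so the diagonal sum is not dominated by $\|(-\Delta)^{\frac{5-d}{4}}\zeta\|^2$; your concern here is well founded. Your fix --- running the interaction Morawetz computation on the diagonal actions $\tilde{\mathcal M}_\mu$ after checking that the sign analysis in Lemma \ref{lem:intmor} ($\mathcal{II}_1\leq 0$ via the $H_{\mu\iota},G_{\mu\iota}$ decomposition, $\mathcal{II}_2\leq 0$ via \eqref{eq.bbelow2}--\eqref{eq.bbelow}, $\mathcal{III}_1=\mathcal{III}_2=0$, and the nonpositivity of the nonlinear contribution since $\gamma_{\mu\nu}\geq0$ and $\Delta_x\psi>0$) is genuinely termwise in $(\mu,\iota)$ --- is sound. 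The one discrepancy is the majorant: this route bounds the left-hand side by $\sum_\mu\sup_t|\tilde{\mathcal M}_\mu(t)|$ rather than by $\sup_t|\mathcal M(t)|$ as literally stated, and the two are not comparable in general; however both are $\lesssim\sum_\mu\|u_{\mu,0}\|^4_{H^2_x}$ by the argument of \eqref{eq:a}, which is all that is used downstream, so your proof establishes the estimate that is actually needed, with a harmless modification of the stated right-hand side in low dimensions.
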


\section{Proof of Theorem \ref{thm:main}}\label{MainThm}

The proof of Theorem \ref{thm:main} is divided in two steps. In the first one we 
shall exploit, inspired by the technicalities of \cite{CT} and \cite{Vis}, some decaying properties of the solution to NL4S \eqref{eq:nls}. In the second one we exhibit the proof of the
scattering by a combination of the argument recovered in the first step with the theory 
established for NLS in \cite{Ca} and \cite{GiVel}, here settled to the case of the system of NL4S. 
Beside this section we adopt the following notations: for any two positive real numbers $a, b,$ we write $a\lesssim b$  (resp. $a\gtrsim b$)
to indicate $a\leq C b$ (resp. $Ca\geq b$), with $C>0,$ we spread out the constant only when it is essential.
Moreover we shall set $w(t,x)=(u_\mu(t,x))_{\mu=1}^N,$ using both the notations where it is required. 

 \subsection{Decay of solutions to \eqref{eq:nls}}\label{decsol}
The purpose of this section is to show some decaying behaviour of the solution to  \eqref{eq:nls} compulsory in the proof of the scattering. One has the following
\begin{proposition}\label{decay}
Let $3\leq d \leq 8$ and  $p\in \R$ such that \eqref{eq:base} holds. If $w\in \mathcal C(\R,\Hin^2_x),$ 
is a global solution to \eqref{eq:nls}, then we have the decay property
\begin{align}\label{eq:decay1}
\lim_{t\rightarrow \pm \infty} \|w(t)\|_{\Lin^q_x}=0,
\end{align}
with $2<q<\frac{2d}{d-4},$ for $d\geq 5$ and with $2<q<+\infty,$ for $3\leq d\leq 4.$ 
\end{proposition}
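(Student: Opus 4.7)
The strategy is to combine the spacetime Morawetz-type bounds of Proposition~\ref{dim3} (or equivalently Corollary~\ref{correl}) with the pointwise-in-time regularity provided by Proposition~\ref{ConsLaw} in order to deduce pointwise-in-time decay of a single $\Lin^{q_0}_x$-norm, and then propagate this decay to the whole range of $q$ by elementary H\"older interpolation.

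First, the conservation laws \eqref{eq:massconservation}--\eqref{eq:energyconservation} together with the defocusing sign of the couplings yield $\sup_{t\in\R}\|w(t)\|_{\Hin^2_x}\le C$, and hence by Sobolev embedding $\sup_{t\in\R}\|w(t)\|_{\Lin^q_x}<\infty$ for every $q$ in the asserted range. Next, I extract a fully spacetime-integrable bound of the form $\|u_\mu\|_{L^\alpha_t L^{q_0}_x}\le C$ with $\alpha<\infty$ and $q_0\in(2,\infty)$: for $d=3$, \eqref{eq:stima} gives this directly with $\alpha=q_0=2p+4$, since at least one of the self-interaction constants $\gamma_{\mu\mu}$ is strictly positive by assumption; for $d=4$, I appeal to Corollary~\ref{correl}, which bounds $\|(-\Delta)^{1/4}|u_\mu|^2\|_{L^2_{t,x}}$, and then to the Sobolev embedding $\dot H^{1/2}(\R^4)\hookrightarrow L^{8/3}(\R^4)$, producing $u_\mu\in L^4_t L^{16/3}_x$.

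The third and key step is to promote spacetime integrability of a norm into pointwise-in-time decay by showing that $t\mapsto\|w(t)\|_{\Lin^{q_0}_x}$ is uniformly continuous on $\R$. Indeed, the equation \eqref{eq:nls}, the Sobolev embeddings, and the constraints \eqref{eq:base} on $p$ provide $\partial_t w\in L^\infty_t\Hin^{-2}_x$, both the bilaplacian/Laplacian and the power-type nonlinearity \eqref{eq.nonlst} being controlled in this negative Sobolev scale via the uniform $\Hin^2_x$ bound. Real interpolation between this Lipschitz-in-$\Hin^{-2}_x$ estimate and the uniform $\Hin^2_x$ bound yields, for every $s<2$, a H\"older continuity $w\in C^{(2-s)/4}(\R;\Hin^s_x)$; choosing $s$ so that $\Hin^s_x\hookrightarrow\Lin^{q_0}_x$ transfers the H\"older property to $t\mapsto\|w(t)\|_{\Lin^{q_0}_x}$. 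A classical observation then shows that a nonnegative, uniformly continuous function on $\R$ lying in $L^\alpha(\R)$ must tend to $0$ at $\pm\infty$, so $\|w(t)\|_{\Lin^{q_0}_x}\to 0$.

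To conclude, for any $q$ in the stated range one applies H\"older's inequality to interpolate between $\Lin^{q_0}_x$ (decaying) and either $\Lin^2_x$ or a higher exponent supplied by the $\Hin^2_x$-embedding (uniformly bounded), which propagates the decay to $\Lin^q_x$ across the whole admissible window. The main obstacle lies in the third step: one must confirm that the nonlinear term \eqref{eq.nonlst} maps into a negative-order Sobolev space with enough ``cushion'' $s<2$ that $\Hin^s_x$ still embeds into $\Lin^{q_0}_x$, which is precisely the algebraic check that makes the constraints $pd>4$ and $p<p^*(d)$ in \eqref{eq:base} essential.
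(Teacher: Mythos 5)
Your route is genuinely different from the paper's. The paper does not establish pointwise-in-time continuity of a Lebesgue norm at all: it argues by contradiction, using the localized Gagliardo--Nirenberg inequality \eqref{eq:GNloc} to locate a unit cube $Q_{x_n}$ carrying mass $\delta_0$ at each bad time $t_n$, then a commutator/cut-off computation showing this local mass persists on $\widetilde Q_{x_n}$ for a uniform time $\bar t$, and finally a lower bound on the \emph{nonlinear, weighted bilinear} term of \eqref{eq:stima0} (resp.\ \eqref{eq:stima}) over the disjoint slabs $(t_n,t_n+\bar t)\times \widetilde Q_{x_n}$, contradicting its finiteness. Your scheme (unweighted spacetime Lebesgue bound $+$ uniform continuity of $t\mapsto\|w(t)\|_{\Lin^{q_0}_x}$ $+$ the classical ``uniformly continuous and $L^\alpha(\R)$ implies decay'' lemma $+$ interpolation) is the Lin--Strauss/Ginibre--Velo mechanism; it is correct and arguably cleaner \emph{where step two is available}: for $d=3$ the estimate \eqref{eq:stima} gives $u_\mu\in L^{2p+4}_{t,x}$ directly, for $d=4$ Corollary \ref{correl} plus $\dot H^{1/2}(\R^4)\hookrightarrow L^{8/3}$ gives $u_\mu\in L^4_tL^{16/3}_x$, and for $d=5$ the same corollary with exponent $0$ gives $u_\mu\in L^4_{t,x}$. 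The uniform-continuity step and the final interpolation are standard and check out under \eqref{eq:base}. One small correction: you need \emph{every} diagonal coupling $\gamma_{\mu\mu}$ strictly positive (which the paper's hypothesis ``$\beta_{\mu\mu}\neq0$ or $\lambda_{\mu\mu}\neq0$'' does supply for each $\mu$), not merely ``at least one'', since otherwise you only control some components of $w$.

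The genuine gap is that the proposition covers $3\le d\le 8$ and your step two fails for $d=6,7,8$. There the exponent in Corollary \ref{correl} is $(5-d)/4<0$, so the bound controls $\||u_\mu|^2\|_{\dot H^{(5-d)/2}}$ with a \emph{negative} Sobolev index; such a norm is dominated by Lebesgue norms rather than dominating them, so no $L^\alpha_tL^{q_0}_x$ bound can be extracted. The only remaining spacetime information in those dimensions is the nonlinear term of \eqref{eq:stima0}, namely $\int_\R\!\!\int\!\!\int |x-y|^{-1}|u_\mu(x)|^{2p+2}|u_\mu(y)|^2\,dx\,dy\,dt<\infty$, and the weight $|x-y|^{-1}$ cannot be discarded globally: if the mass spreads, $\int |x-y|^{-1}|u_\mu(y)|^2dy$ has no uniform lower bound, so finiteness of this quantity alone does not force $\|u_\mu(t)\|_{L^{2p+2}_x}$ to be spacetime integrable. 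This is precisely the obstruction the paper's cube localization is designed to overcome (on $\widetilde Q_{x_n}\times\widetilde Q_{x_n}$ the weight is bounded below). To complete your argument for $6\le d\le 8$ you would have to either import that localization step or find another way to convert the weighted bilinear bound into an unweighted one; as written, the proof does not close in those dimensions.
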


\begin{proof}
We discuss only the case $t\rightarrow \infty$,
the case of $t\rightarrow -\infty$ can be treated similarly. Also we deal first with $d\geq 3,$ and then the case $d=3.$ 
Following \cite{GiVel2},
it is enough to prove the property \eqref{eq:decay1} 
for an appropriate $2<q<\frac{2d}{d-4},$ for $d\geq 5$ (and with $2<q<+\infty,$ for $3\leq d\leq 4$), since the outcome for the general case can be established by the combined action of conservation of mass \eqref{eq:massconservation},  kinetic energy in \eqref{eq:energyconservation}  
and interpolation. We want to prove that
\begin{equation}\label{eq:potenergy2}
\lim_{t\rightarrow \pm \infty} \|w(t)\|_{\Lin^{\frac{2d+4}{d}}_x}=0.
\end{equation}
With the purpose of doing that we proceed as in \cite{Vis}
assuming by the absurd that there exists a sequence
$\{t_n\}$ such that $t_n \to +\infty$ and
\begin{equation}\label{eq:sequencetime}
 \inf_n \|w(t_n, x)\|_{\Lin^{\frac{2d+4}{d}}_x}=\epsilon_0>0.
\end{equation}
Next, one recalls the localized Gagliardo-Nirenberg inequality provided in \cite{CT} 
(see also \cite{L1} and \cite{L2}):
\begin{equation}\label{eq:GNloc}
\|\varphi\|_{\Lin^{\frac{2d+4}{d}}_x}^{\frac{2d+4}{d}}\leq C \left(\sup_{x\in \R^d} \|\varphi\|_{\Lin^2(Q_x)}\right)^{\frac{4}{d}} 
\|\varphi\|^2_{\Hin^2_x},
\end{equation}
with $Q_x$ being the unit cube in $\R^d$ centered in $x$. By \eqref{eq:sequencetime}, \eqref{eq:GNloc}, in which we selected $\varphi=w(t_n, x)$, accomplishing the bound $\|w(t_n, x)\|_{\Hin^2_x}<+\infty$,
we argue that there exists $x_n \in \R^d$ such that
\begin{equation}\label{eq:sequencespace}
\|w(t_n, x)\|_{\Lin^2(Q_{x_n})}=\delta_0>0.
\end{equation}
We claim now that there exists $ \bar t>0$ such that
\begin{equation}\label{eq:claim0}
  \|w(t, x)\|_{\Lin^2(\widetilde Q_{x_n})}\geq \delta_0/2,
\end{equation}
for all  $t\in (t_n, t_n+\bar t)$ and where $\widetilde Q_x=x+[-2,2]^d$ denotes the cube in $\R^d$ of sidelenght $2$ centered at $x$.
To show \eqref{eq:claim0}
we fix a cut--off function $\widetilde\chi(x)\in C^\infty_0(\R^d)$, so as
$\widetilde\chi(x)=1$ for $x\in Q_x$ and $\widetilde\chi(x)=0$ for $x\notin \widetilde Q_x$.
Then one gets
\begin{equation*}
\begin{split}
\left|\frac d{dt} \int_{\R^d} \widetilde\chi(x -x_n)  | w(t,x)|^2 dx \right|\lesssim\kappa\left|\int_{\R^d} \Delta_x\widetilde\chi(x -x_n) \Im(\nabla_xw(t,x)\overline w(t,x))dx\right|\\
+\left|\int_{\R^d} \Delta_x\widetilde\chi(x -x_n) \Im(\Delta_xw(t,x)\overline w(t,x))dx\right|+\\
+\left|\int_{\R^d} \nabla_x\widetilde\chi(x -x_n) \Im(\Delta_xw(t,x)\nabla_x\overline w(t,x)) dx\right|
\lesssim \sup_t \|w(t,x)\|_{\Hin^2_x}^2.
\end{split}
\end{equation*}
By the fact that $H^2_x$-norm of the solution is preserved, we can figure out applying the fundamental theorem of calculus 

\begin{equation}
\left|\int_{\R^d} \widetilde\chi(x -x_n) (|w(s, x)|^2  - |w(t, x)|^2) dx\right|\leq C_1 |t-s|,
\end{equation}
for some $C_1>0$ that does not depend on $n$. For this reason, by picking up $t=t_n$, we get
the immediate inequality 

\begin{equation}
\int_{\R^d}\widetilde \chi(x -x_n) |w(s, x)|^2 dx+ C_1|t_n-s|\geq  \int_{\R^d} \chi(x -x_n) |w(t_n, x)|^2 dx,
\end{equation}
which yields, thanks to the characteristics of the function $\widetilde\chi$,

\begin{equation}
\int_{\widetilde Q_{x_n}} |w(s, x)|^2 dx\geq  \int_{Q_{x_n}} |w(t_n, x)|^2 dx - C_1|t_n-s|.
\end{equation}
Then \eqref{eq:claim0} follows as soon as we choose $\bar t>0$
such that
$3\delta_0^2>4C \bar t$.
However, \eqref{eq:claim0} is in contradiction with the Morawetz estimate \eqref{eq:stima0}.
Indeed, the above bound \eqref{eq:claim0} is such that
\begin{equation}\label{eq:claim1}
 \sum_{\mu=1}^N \|u_\mu(t)\|^{2}_{L^{2}_x(\widetilde Q_{x_n})} \geq C(d)\delta^{2}_0>0,
\end{equation}
for any $t\in (t_n, t_n+\bar t)$ with $\bar t$ as above and with these time intervals singled out to be disjoint.
As a consequence, by H\"older inequality, there exists $\bar\mu \in \{1,\dots,N\}$ so as to ensure
\begin{equation}\label{eq:lowb1}
 \|u_{\bar\mu}(t)\|^{\bar p}_{L^{\bar p}_x(\widetilde Q_{x_n})}\gtrsim \delta^{2}_0, 
\end{equation}
for all $\bar p\geq 2$ and where $t\in (t_n, t_n+\bar t)$, with $\bar t$ being again as above. Thus one concludes  that 
\begin{equation} \label{eq:stima3}
\begin{split}
  \min_{\mu=1,\dots,N} \gamma_{\mu\mu}& \sum_{\mu=1}^N \int_{\R}\int_{\R^d\times\R^d}\frac{ |u_\mu(t,x)|^{2p+2}|u_\mu(t,y)|^2}{|x-y|}
  \,dx\,dy\,dt \\
   &\gtrsim\sum_{\mu=1}^N \sum_{n} \int_{t_n}^{t_n+\bar t}\int_{\widetilde Q_{x_n}\times\widetilde Q_{x_n}} 
   |u_\mu(t,x)|^{2p+2}|u_\mu(t,y)|^2\,dx\,dy\,dt
	\\
 &\gtrsim   \sum_{n} \int_{t_n}^{t_n+\bar t} \delta^{4}_0\,dt=\infty,
\end{split}
\end{equation}
where in the last line we applied \eqref{eq:claim0} in conjunction with \eqref{eq:claim1}, \eqref{eq:lowb1}
and Fubini's Theorem. This produces a contradiction with  \eqref{eq:stima0}.\\

For $d=3$ we can proceed as in the previous case just using \eqref{eq:stima} instead of \eqref{eq:stima0}
 and arguing as above we arrive at 
\begin{equation} \label{eq:stima4}
  \begin{split}
  \min_{\mu=1,\dots,N} &\,\gamma_{\mu\mu} \sum_{\mu=1}^N  \int_{\R}\int_{\R^3}|u_\mu(t,x)|^{2p+4}\,dt\,dx \\
  &\gtrsim  \sum_{\mu=1}^N \sum_{n} \int_{t_n}^{t_n+\bar t}\int_{\widetilde Q_{x_n}} |u_\mu(t,x)|^{2p+4}\,dt\,dx
    =\infty,
  \end{split}
\end{equation}
 again the above inequality gives rise to a contradiction with the interaction estimate \eqref{eq:stima}. The proof is now completed.
\end{proof}

 \subsection{Scattering for the NL4S system \eqref{eq:nls}.}\label{NLSscat}
In this section we perform the proof of Theorem \ref{thm:main} which can be obtained following the classic theory (see \cite{Ca}, \cite{GiVel2} and references therein), however we present the results in a self-contained way adapted to the more general form of the systems regime. One can recall from \cite{Pa1}, \cite{PaX} (see also \cite{KT} and reference therein), the following 

\begin{definition}\label{Sadm}
An exponent pair $(q, r)$ is biharmonic-admissible if $2\leq q,r\leq \infty,$  $(q, r, n ) \neq (2,\infty, 4),$ and
\begin{align}\label{StrTV}
\frac 4q +\frac n{r}=\frac n2.
\end{align}
\end{definition} 

\begin{proposition}\label{Stri}
Let be two biharmonic-admissible pairs $(q,r)$, $(\widetilde q,\widetilde r)$.  Indicate by $\mathcal D=\nabla_{x}$ and by $\mathcal D^2=\Delta_{x}$
Then we have for $k=0,1, 2$ and $\kappa=0,1$ the following estimates:
\begin{align}
\label{eq:200p1}
 \| \mathcal D^k e^{ -it(\Delta^2_x-\kappa \Delta_x)} f\|_{L^q_t L^r_x} + \left \|\mathcal D^k\int_0^t e^{- i (t-\tau) (\Delta^2_x-\kappa \Delta_x)} F(\tau)
d\tau\right\|_{L^q_t L^r_x}&\\\nonumber
\leq C\big (\|\mathcal D^k f\|_{ L^2_x}+\|\mathcal D^k F\|_{L^{\widetilde q'}_t
L^{\widetilde r'}_x}\big ).&
 \end{align}
 \end{proposition}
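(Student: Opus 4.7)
The plan is to reduce the claim to the classical Keel--Tao abstract framework. The operator $e^{-it(\Delta^2_x - \kappa\Delta_x)}$ is a Fourier multiplier of symbol $e^{-it(|\xi|^4 + \kappa|\xi|^2)}$, and as such it commutes with every constant-coefficient differential operator. Consequently $\mathcal D^k$ passes under the propagator both in the homogeneous term $e^{-it(\Delta^2_x - \kappa\Delta_x)}f$ and in the Duhamel integral. Hence it is enough to prove \eqref{eq:200p1} for $k=0$, after which the cases $k=1,2$ follow by applying the $k=0$ statement with data $\mathcal D^k f$ and forcing term $\mathcal D^k F$.

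The first step is the unitarity on $L^2_x$: since the symbol has modulus one, $\|e^{-it(\Delta^2_x - \kappa\Delta_x)}f\|_{L^2_x} = \|f\|_{L^2_x}$ for every $t\in\R$. The second, and principal, step is the dispersive estimate
\begin{equation*}
\|e^{-it(\Delta^2_x - \kappa\Delta_x)}f\|_{L^\infty_x} \leq C|t|^{-d/4}\|f\|_{L^1_x},\qquad t\neq 0.
\end{equation*}
For $\kappa=0$ this is the classical bound of \cite{BKS00}, obtained through stationary phase applied to the oscillatory integral with phase $\Phi(\xi)=x\cdot\xi - t|\xi|^4$. For $\kappa=1$ the phase becomes $x\cdot\xi - t(|\xi|^4+|\xi|^2)$; on the high frequencies the quartic term dominates and the argument of \cite{BKS00} applies unchanged, while on the low frequencies the Hessian is non-degenerate and one obtains a bound of the same form. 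A standard Littlewood--Paley decomposition glues the two regions, so the global $|t|^{-d/4}$ decay persists.

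The third step is to feed these two ingredients into the abstract Strichartz machinery of Keel--Tao \cite{KT}: for any pair of admissible exponents $(q,r),(\widetilde q,\widetilde r)$ satisfying the scaling relation $4/q + d/r = d/2$ with $2\le q,r\le\infty$ and $(q,r,d)\ne(2,\infty,4)$, the $TT^\ast$ argument yields
\begin{equation*}
\|e^{-it(\Delta^2_x-\kappa\Delta_x)}f\|_{L^q_tL^r_x} \leq C\|f\|_{L^2_x}, \qquad
\Bigl\|\int_0^t e^{-i(t-\tau)(\Delta^2_x-\kappa\Delta_x)}F(\tau)\,d\tau\Bigr\|_{L^q_tL^r_x}\leq C\|F\|_{L^{\widetilde q'}_t L^{\widetilde r'}_x}.
\end{equation*}
Summing over the two contributions and reinserting the derivatives $\mathcal D^k$ through the commutation argument gives \eqref{eq:200p1}.

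The only real technical point is the dispersive estimate in the perturbed case $\kappa=1$, because the phase $|\xi|^4+|\xi|^2$ mixes two scales and its critical set depends on the sign of $t$; however the Littlewood--Paley splitting reduces this to two standard stationary-phase computations, both already present in the biharmonic Schrödinger literature (see \cite{BKS00}, \cite{Pa1}, \cite{PaX}). The endpoint exclusion $(q,r,d)\ne(2,\infty,4)$ is exactly the one forced by the Keel--Tao theorem in the endpoint Strichartz case.
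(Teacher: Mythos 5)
Your proposal is correct and follows exactly the route that the paper itself relies on: the paper gives no proof of Proposition \ref{Stri}, simply recalling it from \cite{Pa1}, \cite{PaX} and \cite{KT}, and those references establish it precisely via the $L^2$ unitarity of the propagator, the $|t|^{-d/4}$ dispersive bound of \cite{BKS00} (which already treats the perturbed symbol $|\xi|^4+\kappa|\xi|^2$), the Keel--Tao $TT^*$ machinery with $\sigma=d/4$, and commutation of $\mathcal D^k$ with the Fourier multiplier. Nothing further is needed.
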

Hence, for proving Theorem \ref{thm:main},  the following lemma is compelled to gain the space-time summability required for the scattering, that is
 \begin{lemma}\label{StriSys}
 Let us assume $p$ as in \eqref{eq:base}.
 Then, for any $w\in\mathcal C(\R,\Hin^2_x)$ global solution to \eqref{eq:nls}, we have
 \begin{align}
 w\in L^q(\R, \Win^{2,r}_x),
 \end{align}
for every biharmonic-admissible pair $(q,r)$.
\end{lemma}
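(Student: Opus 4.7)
The plan is to combine the decay provided by Proposition~\ref{decay} with the Strichartz estimates of Proposition~\ref{Stri} through a bootstrap on large time intervals. First, since $(u_\mu)_{\mu=1}^N$ is bounded in $L^\infty(\R,\Hin^2_x)$ by Proposition~\ref{ConsLaw}, and since $\Hin^2_x$ embeds continuously into every $\Lin^s_x$ with $2\le s<\frac{2d}{d-4}$ (or $s<\infty$ when $d=3,4$), I would interpolate the decay of $\|w(t)\|_{\Lin^{(2d+4)/d}_x}$ with the conserved $\Hin^2_x$ bound to obtain
\[
\lim_{|t|\to\infty}\|w(t)\|_{\Lin^a_x}=0
\]
for every $a$ in the same range. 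Continuity of $t\mapsto w(t)$ in $\Hin^2_x$ then upgrades this to $\lim_{T\to\infty}\|w\|_{L^\infty([T,\infty),\Lin^a_x)}=0$.

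Next, I would fix a biharmonic-admissible pair $(q,r)$, a dual admissible pair $(\widetilde q,\widetilde r)$, and an auxiliary exponent $a$ in the range above, chosen so that the H\"older identities $\frac{2p}{a}+\frac{1}{r}=\frac{1}{\widetilde r'}$ and $\frac{1}{q}=\frac{1}{\widetilde q'}$ are satisfied; the constraints $pd>4$ and $p<p^{\ast}(d)$ in \eqref{eq:base} ensure that such a choice is feasible. Using the pointwise bound $|\mathcal D^k G(u_\mu,u_\nu)|\lesssim |w|^{2p}|\mathcal D^k w|$ for $k=0,1,2$, one then derives
\[
\|\mathcal D^k G\|_{L^{\widetilde q'}_t L^{\widetilde r'}_x(I)}\lesssim \|w\|_{L^\infty_t\Lin^a_x(I)}^{2p}\,\|\mathcal D^k w\|_{L^q_t\Lin^r_x(I)}
\]
on any time interval $I\subset\R$.

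The third step is a bootstrap: applying Proposition~\ref{Stri} to the Duhamel formula associated with \eqref{eq:nls} on $I_T:=[T,\infty)$, summing over $k=0,1,2$, and substituting the previous bound produces
\[
\|w\|_{L^q(I_T,\Win^{2,r}_x)}\lesssim \|w(T)\|_{\Hin^2_x}+\|w\|_{L^\infty(I_T,\Lin^a_x)}^{2p}\,\|w\|_{L^q(I_T,\Win^{2,r}_x)}.
\]
Choosing $T$ so large that the first step makes $\|w\|_{L^\infty(I_T,\Lin^a_x)}^{2p}$ sufficiently small absorbs the nonlinear term into the left-hand side, yielding a finite bound on $\|w\|_{L^q(I_T,\Win^{2,r}_x)}$. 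A symmetric argument handles $(-\infty,-T]$, while the compact middle piece $[-T,T]$ is controlled by standard local Strichartz theory together with the uniform $\Hin^2_x$ bound. Having controlled $G$ in its retarded Strichartz norm, one last application of \eqref{eq:200p1} promotes the estimate to every biharmonic-admissible pair.

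The main obstacle I expect is the bookkeeping of H\"older exponents: one must verify that the window singled out by \eqref{eq:base} is wide enough for an auxiliary exponent $a$ to lie simultaneously in the range covered by Proposition~\ref{decay} and in the range compatible with biharmonic admissibility through the H\"older scaling identity above. The coupling structure in \eqref{eq.nonlst} does not add any essential difficulty, since in all the above bounds the entire vector $w$ can be treated through the scalar density $|w|^2=\sum_{\mu=1}^N|u_\mu|^2$.
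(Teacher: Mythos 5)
Your proposal follows essentially the same route as the paper: conservation laws plus the decay of Proposition \ref{decay} feed a H\"older--Strichartz absorption argument on $[T,t]$ with $T$ large (your exponent bookkeeping $\tfrac{2p}{a}+\tfrac1r=\tfrac1{\widetilde r'}$, $\tfrac1{\widetilde q'}=\tfrac1q$ forces $a=\tfrac{pd}{2}$, which lies in the admissible decay window precisely because of \eqref{eq:base}, and the paper's choice $(q,r)=\bigl(\tfrac{8(p+1)}{dp},2p+2\bigr)$ with the splitting $2p=2p\theta+2p(1-\theta)$ is just a different but equivalent way of distributing the same smallness). The one inaccuracy is the claimed pointwise bound $|\mathcal D^2 G|\lesssim|w|^{2p}|\mathcal D^2 w|$, which fails because of cross terms of the type $|w|^{2p-1}|\nabla w|^2$; this should be replaced, as in the paper, by the fractional Leibniz/chain rule applied at the level of the $\Win^{2,r'}_x$ norm, after which the scheme goes through unchanged.
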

\begin{proof}
We take account of the integral operator associated to \eqref{eq:nls} 
\begin{align}\label{eq:opint}
w(t)=e^ {it (\Delta^2_x-\kappa \Delta_x)}w_0 +  \int_{0}^{t} e^ {i(t-\tau) (\Delta^2_x-\kappa \Delta_x)} g(u(\tau),v(\tau),p) d\tau
\end{align}
where $t>0$ and 
\begin{gather}
w(t)=\begin{pmatrix}
      u_1(t)\\
      \vdots\\
      u_N(t)
      \end{pmatrix}, \quad
w_{0}= \begin{pmatrix}
     u_{1,0}\\
      \vdots\\
     u_{N,0}
     \end{pmatrix},
     \nonumber \\
   g(w,p)=
\begin{pmatrix}  \sum^N_{\nu=1}\beta_{1\nu} |u_1|^{p+1}|u_1|^{p-1}u_1\\
\vdots
\\ \sum^{N}_{\nu=1}\beta_{N\nu} |v_\nu|^{p+1}|u_1|^{p-1}u_N \end{pmatrix} +   \sum^N_{\mu,\nu=1}\lambda_{\mu\nu} |u_\mu|^{p+1}|u_\nu|^{p-1}u_\mu\begin{pmatrix}  1\\
\vdots
\\ 1 \end{pmatrix}.
\label{eq.NNL}
\end{gather}
We obtained the thesis by an use of the Strichartz estimates (see again \cite{Pa1} and \cite{Pa2}, for instance) which reduces to handle the inhomogeneous part in \eqref{eq:opint}. Moreover, we concentrate on the second term in \eqref{eq.NNL} only; the first one can be faced in the same manner. 
Select $( q', r')$ such that 
\begin{equation}\label{eq:pair}
 ( q, r):= \left(\frac{8(p+1)}{np},2p+2\right).
\end{equation}
The H\"older inequality and the Leibniz fractional rule give
\begin{equation}\label{eq.1nonl}
\begin{split}
 \|g(w, p)\|_{L^{q'}_{t>T}W^{2,r'}_x}
\lesssim\big\Vert \sum^N_{\mu,\nu=1}\lambda_{\mu\nu}\|u_\mu\|_{W^{2,r}_x}\|u_\nu|^{p+1}|u_\mu|^{p-1}\|_{L_x^{\frac{r}{2p}}}\big\Vert_{L^{q'}_{t>T}}&\\
\lesssim\sum^N_{\mu,\nu=1}\big\Vert  \|u_\mu\|_{W^{2,r}_x}\|u_\nu|^{p+1}|u_\mu|^{p-1}\|_{L_x^{\frac{r}{2p}}}\big\Vert_{L^{q'}_{t>T}},&
\end{split}
\end{equation}
By the inequality (see for instance \cite{HLP})
\begin{align*}
|u_\nu(x)|^{p+1}|u_\mu(x)|^{p-1}+|u_\mu(x)|^{p+1}|u_\nu(x)|^{p-1}\lesssim \left(|u_\mu(x)|^{2p}+|u_\nu(x)|^{2p}\right),
 \end{align*}
 one can see that the last term of the previous inequality is not exceeding 
  \begin{equation}\label{eq.1non2f}
 \begin{split}
     \sum_{\mu, \nu=1}^N\Big\|
   \|u_\mu\|_{W^{2,r}_x} \|u_\nu\|_{L_x^{r}}^{2p}  
   \Big \|_{L^{q'}_{t>T}}&
  \\
 \lesssim \sum^N_{\mu,\nu=1} \Big\|
   \|u_\mu\|_{W^{2,r}_x}  
   \Big(\|u_\nu\|_{L_x^{r}}^{2p(1-\theta)}\|u_\nu\|_{L_x^{r}}^{2p\theta}\Big)\|_{L^{q'}_{t>T}}.&
\end{split}
\end{equation}
We select now $\theta\in(0,1)$ so that $\theta=(q-q')/2pq'$, this implies that the term in the last line of  
\eqref{eq.1non2f} is controlled by
 \begin{equation}\label{eq.1non2s}
 \begin{split}
  \Big\| 
  \|w\|_{W^{2,r}_x} 
   \|w\|_{L_x^{r}}^{2p(1-\theta)}
  \|w\|_{L_x^{r}}^{2p\theta}  \Big\|_{L^{q'}_{t>T}}
    \lesssim \Big\| 
  \|w\|_{W^{2,r}_x} 
   \|w\|_{L_x^{r}}^{\frac{q}{q'}-1}
  \|w\|_{L_x^{r}}^{2p+1-\frac{q}{q'}}  \Big\|_{L^{q'}_{t>T}}&
  \\
  \lesssim \Big\|
   \|w\|_{W^{2,r}_x}^{\frac{q}{q'}}
   \|w\|_{L_x^{2p+2}}^{2p+1-\frac{q}{q'}}
   \Big\|_{L^{q'}_{t>T}}\lesssim \|w\|_{L^{\infty}_{t>T}L_x^{r}}^{2p+1-\frac{q}{q'}}
  \|w\|_{L^{q}_{t>T}W^{2,r}_x}^{q-1}&,
\end{split}
\end{equation}
with all the constants independent from $t,T$. These conclusions in association with the equation \eqref{eq:opint}, Proposition \ref{decay},
and an use of the inhomogenehouse Strichartz estimates in \eqref{eq:200p1}
lead to
\begin{align}\label{eq.1non4}
\|w\|_{L^{q}_{t>T} \Win^{1,r}_x}
\leq C\|w_0\|_{\Hin^2_x}+\epsilon(T)\|w\|_{L^{q}_{t>T}\Win^{2,r}_x}^{q-1},
\end{align}
where $\epsilon(T)\rightarrow 0$ as $T\rightarrow \infty.$ Then for $T$ sufficiently large we arrive at
$$
\|w\|_{L^{q}((T,t),\Win^{2,r}_x)}\leq \bar C,
$$
with the constant $\bar C$ independent from $t$. In that way we get that $w\in L^{q}((T,\infty), \Win^{2,r}_x).$ Analogously  we have $w\in L^{q}((-\infty, -T), \Win^{2,r}_x).$
We conclude by a continuity that $w\in L^q(\R, \Win^{2,r}_x)$.
\end{proof}

Another consequence the above lemma is the following:

\begin{proof}[Proof of Theorem \ref{thm:main}]
  The proof of Theorem \ref{thm:main} is now a straight consequence of Lemma \ref{StriSys} above:
  we shortly demonstrate it here for the sake of completeness.
  \\
  {\em Asymptotic completeness:} We write  $\overline w(t)=e^ {-it(\Delta^2_x-\kappa \Delta_x)}w(t)$ getting
  \begin{align}\label{eq:opint1}
\overline w(t)=w_0 +i  \int_{0}^{t} e^ {-is (\Delta^2_x-\kappa \Delta_x)}  g(w,p) ds,
\end{align}
  moreover one has, for $0<t_1<t_2$,
  \begin{align}\label{eq:opint2}
\overline w(t_2)-\overline w(t_1)=  i\int_{t_1}^{t_2} e^ {-is (\Delta^2_x-\kappa \Delta_x)}  g(w,p)ds.
\end{align}
By applying the Strichartz estimates \eqref{eq:200p1}, we infer to
 \begin{align}\label{eq:stric}
\|\overline w(t_1)-\overline w(t_2)\|_{\Hin^2_x}\lesssim &
\\
\|e^ {it\Delta_{x}} (\overline w(t_1)-\overline w(t_2))\|_{\Hin^2_x}\lesssim \| g(w,p)\|_{L^{q'}_{(t_1,t_2)} \Win^{2,r'}_x},&
\nonumber
\end{align}
with $(q, r)$ is Schr\"odinger-admissible biharmonic pair as in \eqref{eq:pair}. By the steps we followed in the proof of Lemma 
\ref{StriSys} one attains
 \begin{align}\label{eq:stric1}
\lim_{t_1,t_2\rightarrow \infty}\|\overline w(t_1)-\overline w(t_2)\|_{\Hin^2_x}=0.
\nonumber
\end{align}
Then we can see that there exists $(u_{1,0}^{\pm},\dotsc, u_{N,0}^\pm)\in H^2(\R^d)^N$ and thus the map
 $(u_1(t),\dotsc, u_N(t))\rightarrow (u_{1,0}^{\pm},\dotsc, u_{N,0}^\pm)$ in $H^2(\R^d)^N$ as $t\rightarrow\pm \infty.$
Note that, by Proposition \ref{ConsLaw}, we achieve also the following properties
\begin{equation}\label{eq:asco1}
\begin{split}
M(u_{1,0}^{\pm},\dotsc, u_{N,0}^\pm)=\|(u_{1,0},\dotsc, u_{N,0})\|^2_{\Lin^2_x}, \\ 
 \sum_{\mu=1}^N  \int_{\R^d} \Big(\abs{\Delta{u_{\mu,0}^{\pm}}}^2+\kappa|\nabla u_{\mu,0}^{\pm}|\Big) dx=E(u_{1,0},\dotsc, u_{N,0}).
\end{split}
\end{equation}

  {\em Existence of wave operators:} The construction of the wave operators is standard and straightforward from 
  what we see above. So we skip.
  \end{proof}

{\it {\bf Acknowledgements:}}
The author would like to thank Dipartimento di Matematica, Univerist\`a di Pisa, where He was Visiting Professor during the period of preparation of this paper.

\end{document}